\newtheorem{theorem}{Theorem}
\numberwithin{theorem}{section} 
\newtheorem{lemma}[theorem]{Lemma}
\newtheorem{corollary}[theorem]{Corollary}
\newtheorem{question}[theorem]{Question}
\newtheorem{claim}[theorem]{Claim}
\theoremstyle{remark}
\theoremstyle{definition}
\newtheorem{definition}[theorem]{Definition}
\title{Ordered Ramsey numbers of graphs with $m$ edges}
\author{Domagoj Bradač\thanks{Department of Mathematics, ETH Z\"urich, Z\"urich, Switzerland. Research supported in part by SNSF grant 200021-228014. Email: \textbf{\{domagoj.bradac, benjamin.sudakov\}@math.ethz.ch}.} \and Patryk Morawski\footnotemark[2]\thanks{Department of Computer Science, ETH Z\"urich, Z\"urich, Switzerland.
Email: \textbf{pmorawski@student.ethz.ch}.} \and Benny Sudakov\footnotemark[1] \and Yuval Wigderson\footnotemark[3]\thanks{Institute for Theoretical Studies, ETH Z\"urich, Z\"urich, Switzerland. 
   Supported by Dr.\ Max R\"{o}ssler, the Walter Haefner Foundation, and the ETH Z\"{u}rich Foundation. Email: {\textbf{yuval.wigderson@eth-its.ethz.ch}}.}}
\date{}
\begin{document}

\maketitle
\begin{abstract}
    Given a vertex-ordered graph $G$, the ordered Ramsey number $r_<(G)$ is the minimum integer $N$ such that every $2$-coloring of the edges of the complete ordered graph $K_N$ contains a monochromatic ordered copy of $G$.
    Motivated by a similar question posed by Erd\H{o}s and Graham in the unordered setting, we study the problem of bounding the ordered Ramsey number of any ordered graph $G$ with $m$ edges and no isolated vertices.
    We prove that $r_<(G) \leq e^{10^9 \sqrt{m} (\log \log m)^{3/2}}$ for any such $G$, which is tight up to the $(\log \log m)^{3/2}$ factor in the exponent. As a corollary, we obtain the corresponding bound for the oriented Ramsey number of a directed graph with $m$ edges.
\end{abstract}

\section{Introduction}

For a graph $G$, the Ramsey number $r(G)$ is the minimum integer $N$ such that every $2$-coloring of the edges of the complete graph $K_N$ on $N$ vertices contains a monochromatic copy of $G$.
The existence of these numbers was famously proved by Ramsey~\cite{ramsey1930}, while the first good quantitative bounds were proved by Erd\H{o}s and Szekeres~\cite{Erdös1935}. Since then, the field of graph Ramsey theory has flourished, and determining how $r(G)$ depends on the graph $G$ has become one of the most-studied questions in combinatorics.

Arguably, the most important question in the field is determining the Ramsey number $r(K_n)$ of the complete graph $K_n$ on $n$ vertices.
Here, after almost a century of only minor improvements on the standard bounds $2^{n/2} \leq r(K_n) \leq 4^{n}$, a significant breakthrough was recently achieved by Campos, Griffiths, Morris and Sahasrabudhe~\cite{campos2023exponential} who showed an exponentially better upper bound of $(4-\varepsilon)^n$ for some constant $\varepsilon > 0$, see also~\cite{gupta2024optimizing, balister2024upper}.

Another prominent direction of study is to understand the Ramsey numbers of sparse graphs. In 1975, Burr and Erd\H{o}s~\cite{burr_magnitude_1975} conjectured that Ramsey numbers of graphs with bounded degeneracy are linear in their number of vertices. In 1983, Chv\'{a}tal, Rödl, Szemer\'edi and Trotter~\cite{chvatal1983ramsey} proved a special case of the conjecture, namely that bounded degree graphs have linear Ramsey numbers. However, proving the Burr--Erd\H{o}s conjecture in full generality was very challenging, and it was only resolved by Lee in 2017~\cite{lee2017ramsey}.

A related question was posed by Erd\H{o}s and Graham in 1973: among all graphs $G$ on $m$ edges, what graph maximizes the Ramsey number? The intuition given by the above considerations is that one would like to make $G$ as dense as possible. In fact, Erd\H{o}s and Graham~\cite{erdos1975partition} conjectured that among all graphs with $m = \binom{n}{2}$ edges and no isolated vertices it is the complete graph $K_n$ that has the maximum Ramsey number.
This conjecture remains open, and is likely very difficult. Motivated by the lack of progress, in the 1980's Erd\H{o}s~\cite{erdos1984someproblems} asked whether the Ramsey number of any such graph $G$ is at least not much larger that the Ramsey number of the complete graph of the same size.
In other words, he conjectured that there exists a constant $c > 0$ such that for any graph $G$ with $m$ edges and no isolated vertices we have $r(G) \leq 2^{c \sqrt{m}}$.
This conjecture was proved by Sudakov~\cite{sudakov2011conjecture} in 2011.

In this paper, we will study the analogue of the above conjecture for ordered graphs. An ordered graph $G$ on $n$ vertices is a graph whose vertices are labeled with $\{1, \dots, n\}$. We say that an ordered graph $G$ on $[N]$ contains a an ordered graph $H$ on $[n]$ if there exists a mapping $\phi: V(H) \to V(G)$ such that $\phi(i) < \phi(j)$ for each $1 \leq i < j \leq n$ and $(\phi(i), \phi(j)) \in E(G)$ whenever $(i, j) \in E(H)$. For an ordered graph $G$ we then define its ordered Ramsey number $r_<(H)$ as the minimum $N$ such that any $2$-coloring of the complete ordered graph on $[N]$ contains a monochromatic copy of $H$.

The systematic study of ordered Ramsey numbers was initiated by Conlon, Fox, Lee and Sudakov~\cite{conlon2017ordered} and, independently, by Balko, Cibulka, Kr\'al and Kyn\v{c}l~\cite{balko2020ordered}.
Since then it has attracted a considerable amount of interest (e.g.~\cite{balko2020ordered, rohatgi2019off, girao2024ordered, fox202edge}).
In general, ordered Ramsey numbers can behave very differently from their non-ordered counterparts. For example, the Burr--Erd\H{o}s conjecture does not hold for ordered graphs as there exist ordered matchings whose Ramsey number is superpolynomial in their number of vertices~\cite{conlon2017ordered, balko2020ordered}.

In this paper, we initiate the study of the analogue of the conjecture by Erd\H{o}s for ordered graphs.
\begin{question}\label{question:ordered_m_edges}
    Does there exist a constant $c > 0$ such that for any ordered graph $H$ with $m$ edges and no isolated vertices it holds that $r_<(H) \leq 2^{c \sqrt{m}}$?
\end{question}

We believe the answer to this question should indeed be positive. Our main result is a proof of a slightly weaker bound, which differs from the conjectured truth by an additional $(\log \log m)^{3/2}$ factor in the exponent.

\begin{theorem}\label{theorem:m_edges_ordered}
    Let $H$ be an ordered graph with $m$ edges and no isolated vertices. Then
    \[
        r_{<}(H) \leq e^{10^9 \sqrt{m} (\log \log m)^{3/2}}.
    \]
\end{theorem}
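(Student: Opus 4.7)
The plan is to set $N = \exp(10^9 \sqrt{m}(\log \log m)^{3/2})$ and show that every $2$-coloring of the edges of the complete ordered graph on $[N]$ contains a monochromatic ordered copy of $H$. Since $H$ has no isolated vertices, $n := |V(H)| \leq 2m$, which will be the only input we use about the vertex count.

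The overall approach mirrors Sudakov's proof of the unordered analogue, suitably adapted to the ordered setting. Fix a threshold $\Delta$ to be chosen later and partition $V(H) = V_h \cup V_\ell$, where $V_h$ consists of the vertices of degree at least $\Delta$; a standard double-count gives $|V_h| \leq 2m/\Delta$. First I would embed $V_h$ as a monochromatic ordered clique, which is possible using the classical bound $r_<(K_s) = r(K_s) \leq 4^s$; this is consistent with our target $N$ provided $4^{|V_h|} = \exp(O(m/\Delta))$ is at most $N$. The embedding is then to be extended to the low-degree part $V_\ell$ (in which every vertex has degree less than $\Delta$) via an ordered analog of dependent random choice: find a large subset $A \subseteq [N]$ together with an embedding of $V_h$ into $A$ such that, at every stage, each small collection of already-embedded vertices has a large common monochromatic neighborhood lying to the right, and then greedily insert the vertices of $V_\ell$ in their prescribed order.

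The main obstacle is this extension step. In the unordered setting the greedy extension follows routinely from polynomial Ramsey bounds for bounded-degree graphs (Chv\'atal--R\"odl--Szemer\'edi--Trotter), but in the ordered setting this fails already for matchings, which can have super-polynomial ordered Ramsey numbers. A remedy is to introduce a secondary structural parameter of $H[V_\ell]$ --- such as the \emph{interval chromatic number}, i.e.\ the minimum number of intervals into which $V(H)$ can be split so that each is independent --- for which quasi-polynomial ordered Ramsey bounds are available, and to combine this with a quasirandomness argument in the host graph. The additional $(\log \log m)^{3/2}$ factor in the exponent should emerge when balancing the exponential cost $\exp(O(m/\Delta))$ of embedding $V_h$ against the quasi-polynomial cost of extending to $V_\ell$, and optimizing over $\Delta$. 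A further subtlety is that this secondary parameter must itself be controlled in terms of $m$, which I expect will require a preliminary decomposition of $H$ into pieces with restricted ordered structure, possibly via an Erd\H{o}s--Szekeres-style argument applied to the ordering of $V_\ell$.
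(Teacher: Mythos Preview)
Your proposal misidentifies the central obstacle. You plan to embed the high-degree set $V_h$ as a monochromatic ordered clique and then extend by finding large common monochromatic neighbourhoods ``lying to the right''. But the high-degree vertices of $H$ need not be an initial segment of the ordering of $H$; in general they are scattered throughout. If, say, $V_h = \{v_3, v_7, v_{12}\}$, then $v_1, v_2$ must be embedded \emph{to the left} of the image of $v_3$, while $v_4, v_5, v_6$ must go strictly between the images of $v_3$ and $v_7$, and so on. A monochromatic clique with large common neighbourhood on one side is therefore useless; what you actually need is a clique $\{u_1 < \dots < u_a\}$ together with large sets $V_0 < \{u_1\} < V_1 < \{u_2\} < \dots < \{u_a\} < V_a$ such that all edges from the clique into $\bigcup_i V_i$ are of one colour. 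The paper calls this an $(a,b)$-\emph{skeleton} and finding such structures (via a supersaturation argument on ordered cliques) is the key new idea; this is absent from your outline.

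A second gap is that even after introducing skeletons, the recursion of Sudakov's unordered proof does not go through directly: once the greedy embedding fails and you pass to a sparse pair $(L,R)$, each of $L,R$ lies inside a single block $V_i$, so the old skeleton is destroyed and a new one must be found from scratch. This is expensive and cannot be iterated many times. The paper handles this by a two-phase argument: a first phase (with cheap parameters) that produces a set $W$ of small density in one colour, then invokes the Erd\H{o}s--Szemer\'edi theorem on $W$ to find a much larger skeleton, which in a single further step yields a pair $(L_2,R_2)$ with density at most $1/(10m)$ in one colour; the proof then finishes by induction, embedding the left half of $H$ in $L_2$ and the right half in the common neighbourhood inside $R_2$. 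Neither the interval chromatic number nor an Erd\H{o}s--Szekeres decomposition of $V_\ell$ enters the argument; the $(\log\log m)^{3/2}$ loss comes from the first-phase recursion depth and the cost of building skeletons, not from controlling a secondary parameter of $H[V_\ell]$.
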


In fact, we prove a somewhat stronger statement, namely an off-diagonal version of Theorem~\ref{theorem:m_edges_ordered} in which we may be searching for two different graphs in the two colors;  see Theorem~\ref{theorem:off_diagonal_ordered} for the precise statement.

As a consequence of the off-diagonal result, we immediately get a corresponding theorem for directed graphs. 
For an acyclic directed graph $D$, its oriented Ramsey number, denoted $\vv{r}(D)$, is the minimum integer $N$ such that every tournament on $N$ vertices contains a copy of $D$. Let $D^+$ and $D^-$ be ordered graphs obtained by taking the underlying graph of $D$ and the vertex ordering to be a topological sort of $D$ and its reverse, respectively. 
Fox, He and Wigderson~\cite{fox2024ramsey} observed that $\vv{r}(D) \leq r_<(D^+, D^-)$. 
Theorem~\ref{theorem:off_diagonal_ordered} thus implies the following:

\begin{corollary}
    Let $D$ be an acyclic directed graph with $m$ edges and no isolated vertices. Then
    \[
        \vv{r}(D) \leq e^{10^9 \sqrt{m} (\log \log m)^{3/2}}.
    \]
\end{corollary}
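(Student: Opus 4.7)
The plan is to read this off directly from the off-diagonal Theorem~\ref{theorem:off_diagonal_ordered} via the reduction of Fox, He and Wigderson~\cite{fox2024ramsey} cited just above the statement. Concretely, since $D$ is acyclic it admits a topological ordering; let $D^+$ and $D^-$ be the ordered graphs obtained by placing the underlying undirected graph of $D$ on a topological ordering and on its reverse, respectively. Both $D^+$ and $D^-$ then have exactly $m$ edges (the same edges as $D$) and no isolated vertices.

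Next I would spell out the inequality $\vv{r}(D) \le r_<(D^+, D^-)$: given a tournament $T$ on $N := r_<(D^+, D^-)$ vertices, label its vertices $1, 2, \dots, N$ arbitrarily, and color a pair $\{i,j\}$ with $i<j$ red if $T$ orients it as $i \to j$ and blue otherwise. A red ordered copy of $D^+$ then yields a copy of $D$ in $T$, because every edge of $D$ has its tail before its head in the topological ordering and therefore gets mapped to a forward-oriented edge of $T$. A blue ordered copy of $D^-$ also yields a copy of $D$ in $T$, this time by reading the embedded vertices in reverse order: reversing the ordering flips which endpoint of each edge is ``smaller'', so blue edges line up with the tournament directions of the edges of $D$.

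Finally, applying Theorem~\ref{theorem:off_diagonal_ordered} to the pair $(D^+, D^-)$ gives $r_<(D^+, D^-) \leq e^{10^9\sqrt{m}(\log\log m)^{3/2}}$, which is exactly the desired bound on $\vv{r}(D)$. There is no genuine obstacle here: all of the combinatorial difficulty is packaged inside Theorem~\ref{theorem:off_diagonal_ordered}, and the passage from ordered to oriented Ramsey numbers is a short unpacking of definitions together with the existence of a topological ordering of $D$.
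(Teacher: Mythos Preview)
Your proposal is correct and matches the paper's approach exactly: the paper derives the corollary by applying Theorem~\ref{theorem:off_diagonal_ordered} with $H_1=D^+$ and $H_2=D^-$ together with the Fox--He--Wigderson observation $\vv{r}(D)\le r_<(D^+,D^-)$. You additionally spell out the short reduction, which the paper merely cites, and the passage from the constant $10^8$ with $\log\log(2m)$ to $10^9$ with $\log\log m$ is immediate.
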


The study of oriented Ramsey numbers was initiated by Stearns in 1951~\cite{stearns1959voting} and since then they have been extensively studied in the literature (e.g.~\cite{fox2024ramsey,draganic2021powers,draganic2022ramsey,kuhn2011aproof,morawski2024oriented}).
Recently, Fox, He and Wigderson~\cite{fox2024ramsey} showed that, as for the ordered graphs, the Burr--Erd\H{o}s conjecture is not true in the oriented setting.
More precisely, they showed that for any $\Delta$ and any $n$ sufficiently large with respect to $\Delta$, there exists an acyclic digraph $D$ on $n$ vertices and with maximum degree $\Delta$ such that $\vv{r}(D) \geq n^{\Omega(\Delta^{2/3}/ {\log^{5/3} \Delta)}}$.
On the other hand, the best known upper bound for the oriented Ramsey number of a digraph $D$ with maximum degree $\Delta$, also due to Fox--He--Wigderson \cite{fox2024ramsey},  is $\vv{r}(D) \leq n^{\mathcal{O}_{\Delta}(\log n)}$.
Thus, there is a big gap between the polynomial lower bound and the super-polynomial upper bound for any fixed $\Delta$.
Here we show that under the weaker assumption that the underlying graph of $G$ is $d$-degenerate for some constant $d \geq 3$, the oriented Ramsey number of $G$ can indeed be super-polynomial.

\begin{theorem}\label{theorem:degenerate_lower_bound}
    For any $n$ there exists a digraph $D$ whose underlying graph is $3$-degenerate such that
    \[
        \vv{r}(D) \geq n^{\Omega(\frac{\log n}{\log \log n})}.
    \]
\end{theorem}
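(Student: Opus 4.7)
The plan is to exhibit, for each $n$, an explicit acyclic digraph $D$ on $n$ vertices whose underlying graph is $3$-degenerate, together with a tournament $T$ on $N = n^{\Omega(\log n / \log\log n)}$ vertices containing no copy of $D$. Set $\ell := \lceil c \log n / \log\log n \rceil$ for a small constant $c>0$, and partition $V(D) = V_1 \sqcup \cdots \sqcup V_\ell$ into $\ell$ layers whose sizes sum to $n$ (geometrically increasing with ratio roughly $\log n$, say). Between consecutive layers $V_i$ and $V_{i+1}$, place a pseudo-random bipartite graph in which every vertex of $V_{i+1}$ has exactly $3$ in-neighbors in $V_i$, and direct every such edge from $V_i$ to $V_{i+1}$. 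Peeling vertices in reverse layer order shows the underlying graph is $3$-degenerate: when a vertex $v \in V_{i+1}$ is removed, its out-neighbors in $V_{i+2},\ldots,V_\ell$ are already gone, leaving only its $3$ in-neighbors in $V_i$.

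For the lower bound, the naive first-moment estimate $N^n \cdot 2^{-m}$ is useless because $3$-degeneracy forces $m = O(n)$, yielding only a constant. Instead, the plan is to analyze embeddings of $D$ into the candidate tournament layer by layer. Any embedding $\phi\colon V(D) \hookrightarrow V(T)$ must map each layer $V_i$ to a set $S_i \subseteq V(T)$ so that the bipartite orientation pattern between $S_{i-1}$ and $S_i$ realizes the prescribed $3$-regular structure of $D$. By choosing the inter-layer bipartite graphs of $D$ to be sufficiently rigid --- for instance, as incidence graphs of projective planes, or as random $3$-regular bipartite graphs whose expansion properties are certified via the Lov\'asz Local Lemma --- and combining this with a carefully chosen tournament $T$ (either a random tournament analyzed via concentration, or an iterated Erd\H{o}s--Moser-type construction), one hopes to bound the number of valid extensions to $V_{i+1}$ given any embedding of $V_1 \cup \cdots \cup V_i$. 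Compounding the per-layer savings over all $\ell = \Theta(\log n/\log\log n)$ layers yields the desired super-polynomial bound on $N$.

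The principal obstacle is quantifying the per-layer rigidity. To reach $N = n^{\Omega(\log n/\log\log n)}$, each layer must contribute a multiplicative savings that is polynomial in $N$ --- substantially stronger than what standard expander or Local Lemma estimates typically provide. Meeting this requirement while respecting $3$-degeneracy, which, in contrast to the bounded-degree regime of Fox--He--Wigderson, forbids the direct use of parallel high-degree gadget vertices, is the main technical hurdle. A promising route is to exploit the freedom that $3$-degeneracy allows in the \emph{out-degree} direction (which may be arbitrarily large) by attaching auxiliary ``witness'' vertices to each layer transition that certify the rigidity, in the spirit of the constructions used in the known lower bounds for ordered Ramsey numbers of matchings; combining these witnesses with the layered backbone should yield both $3$-degeneracy and the required combinatorial inflexibility.
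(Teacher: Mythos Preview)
Your proposal is not a proof: you explicitly name the ``principal obstacle'' (obtaining a per-layer savings that is polynomial in $N$) and offer only heuristic suggestions for how to overcome it. The difficulty is real. In your layered digraph every vertex of $V_{i+1}$ has \emph{only in-edges} to $V_i$; in any blow-up tournament (which is the natural host to try), such a vertex can be placed in \emph{any} blob that dominates the blobs holding its three in-neighbours, so there is essentially no rigidity at each layer. Appealing to expansion or the Local Lemma does not change this: with three edges per vertex you simply cannot force a polynomial-in-$N$ constraint, and the vague ``witness vertex'' idea is not developed into an argument.

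The paper's construction is quite different and sidesteps this obstacle entirely. The digraph is the $(1,2)$-subdivision $S_k$ of a transitive tournament: for every triple $i<j<k$ of base vertices one adds a vertex $v_{ijk}$ with edges $i\to v_{ijk}$, $v_{ijk}\to j$, $v_{ijk}\to k$. The crucial feature---absent from your layered graph---is that each auxiliary vertex has \emph{both} an in-edge and an out-edge to base vertices. In a blow-up tournament, if base vertices $i$ and $j$ land in the same blob then $v_{ijk}$ is forced into that blob as well (otherwise the two edges $i\to v_{ijk}$ and $v_{ijk}\to j$ would have to be oriented the same way between blobs). This single observation yields two consequences: any blob containing many base vertices already contains a smaller $S_{k'}$, and the blobs containing at least two base vertices must be transitively ordered by the edges $v_{a_i b_i b_j}\to b_j$. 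Taking the host to be an iterated blow-up of a random tournament with no transitive subtournament of size $4\log k$ then gives a clean recursion and the bound $\vv{r}(S_k)\ge k^{\Omega(\log k/\log\log k)}$. The mixed in/out pattern at each auxiliary vertex is exactly the rigidity mechanism your plan is missing.
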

The family of graphs we use to prove \cref{theorem:degenerate_lower_bound} can be viewed as a generalization of subdivisions of tournaments. A $1$-subdivision of a transitive tournament $\overrightarrow{K_n}$ on $n$ vertices is the digraph obtained by taking the set of base vertices $\{1, \dots, n \}$ and for each pair $i < j$ adding one additional vertex $v_{ij}$ together with edges $i v_{ij}$ and $v_{ij} j$.
Recently, oriented Ramsey numbers of $1$-subdivisions of transitive tournaments were studied by various researchers and it was finally proved by Draganić, Munh\'a Correia, Sudakov and Yuster~\cite{draganic2022ramsey} that these numbers are linear in the order of the subdivision.

For our construction instead of pairs of vertices we consider triples. Our diagraph has a set of base vertices $\{1, \dots, n \}$ together with an additional vertex $v_{ijk}$ and edges $iv_{ijk}$, $v_{ijk}j$ and $v_{ijk}k$ for every triple $i < j < k$. This digraph is clearly $3$-degenerate. Somewhat surprisingly, going from pairs to triples increases the oriented Ramsey of such subdivisions from linear to super-polynomial in the number of their vertices. For more details, we refer the reader to \cref{section:subdivisions}.

\vspace{10px}
The remainder of the paper is organized as follows.
In \cref{section:m_edges_proof} we first state the asymmetric version of \cref{theorem:m_edges_ordered}, and, after giving a proof outline, we prove this Theorem.
In \cref{section:subdivisions} we first define a generalization of subdivisions of transitive tournaments and then, using them, we prove \cref{theorem:degenerate_lower_bound}.

\paragraph{Notation and terminology:}
For an ordered graph $G = G_<$, we use $V(G)$ to denote its vertex set and $E(G)$ to denote it edge set.
For $A \subseteq V(G)$, we denote by $G[A]$ the subgraph of $G$ induced by $A$ and write $e_G(A) = |E(G[A])|$.
We define the density $d_G(A)$ of $A$ as $\frac{e_G(A)}{\binom{|A|}{2}}$. 
For the entire vertex set we write $d(G)$ as a shorthand for $d(V(G))$.
For a pair of sets $A, B \subseteq V(G)$, we write $e_G(A, B)$ for the number of edges of $G$ with one endpoint in $A$ and the other in $B$.
We define the density $d_G(A, B)$ between $A$ and $B$ as $\frac{e_G(A, B)}{|A| \cdot |B|}$.
We sometimes drop the subscript and write $d(A)$ instead of $d_G(A)$ etc.\ if the oriented graph is clear from the context.
We write $A < B$ if $a < b$ for all $a \in A$ and $b \in B$.

A \emph{monochromatic book} in a coloring of $E(G)$ is a pair $A, B \subseteq V(G)$ such that all edges in $G[A \cup B]$ with at least one endpoint in $A$ have the same color.
Books have been extensively studied in the Ramsey theory literature (e.g~\cite{rousseau1978ramsey, conlon2022ramsey, fox2023ramsey, campos2023exponential}) and have been an important ingredient in the recent improvements on diagonal Ramsey numbers~\cite{campos2023exponential}.
Throughout this paper, all logarithms are to the base $e$.
We omit floor and ceiling signs whenever they are not essential.

\section{Proof of Theorem \ref{theorem:m_edges_ordered}}\label{section:m_edges_proof}

Instead of proving Theorem \ref{theorem:m_edges_ordered}, we will prove a more general, off-diagonal version of it. For two ordered graphs $H_1, H_2$, we define $r_<(H_1, H_2)$ to be the minimum integer $N$ such that in any red-blue edge-coloring of the complete ordered graph on $N$ vertices, there is a red copy of $H_1$ or a blue copy of $H_2$. We prove the following which clearly implies Theorem \ref{theorem:m_edges_ordered} by taking $H_1 = H_2 = H$.

\begin{theorem}\label{theorem:off_diagonal_ordered}
    Let $H_1,H_2$ be ordered graphs without isolated vertices and with $m_1,m_2$ edges, respectively.
    Then
    \[
        r_<(H_1, H_2) \leq e^{10^8 (m_1m_2)^{1/4} (\log \log (m_1 + m_2))^{3/2}}.
    \]
\end{theorem}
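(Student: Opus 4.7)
My plan is to adapt Sudakov's proof~\cite{sudakov2011conjecture} of the analogous unordered statement $r(H) \leq 2^{O(\sqrt{m})}$ to the ordered, off-diagonal setting. Given a $2$-edge-coloring of the complete ordered graph on $N$ vertices with no red ordered copy of $H_1$ and no blue ordered copy of $H_2$, I aim to derive a contradiction by iteratively building up partial embeddings. Compared to the unordered case, the essential new constraint is that every placement must respect the ordering of each $H_i$, which forces us to work throughout with \emph{ordered books}, i.e.\ monochromatic clique-plus-common-neighborhood pairs $(A, B)$ with $A < B$ in the host ordering, so that subsequent embedding steps can take place inside $B$ while the vertices of $A$ keep their correct relative positions.

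\textbf{Degree partition.} In each $H_i$, I partition the vertex set into \emph{heavy} vertices (those of degree exceeding a threshold $\Delta_i$) and \emph{light} vertices. The thresholds are chosen to balance the embedding costs of the two colors: a natural choice has $\Delta_1/\Delta_2 \approx \sqrt{m_1/m_2}$ and $\Delta_1\Delta_2$ of order $\sqrt{m_1 m_2}$. With this setup the number of heavy vertices in $H_i$ is $O(m_i/\Delta_i)$, and, heuristically, each such vertex contributes roughly $\Delta_i$ to the exponent of $N$, so that both colors together yield the desired $(m_1 m_2)^{1/4}$ factor in the exponent up to polylog corrections.

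\textbf{Embedding phases.} The embedding proceeds in two phases. In the first phase I handle the heavy vertices via an iterated monochromatic ordered-book lemma: at each step, using the off-diagonal structure, we locate either a sufficiently large red ordered book in which to place the next heavy vertex of $H_1$, or a sufficiently large blue ordered book for $H_2$. After each such step we restrict attention to the right-hand set $B$ of the book, which has size a controlled fraction of the previous reservoir. Once all heavy vertices have been placed inside their respective monochromatic regions, the second phase embeds the remaining light vertices greedily from left to right, using that each such vertex has degree at most $\Delta_i$ and therefore, by the density of the reservoir, has a non-empty common-neighborhood candidate set in the desired color at every step.

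\textbf{Main obstacle.} The hardest part is obtaining the correct exponent $(m_1 m_2)^{1/4}$ with only a polylogarithmic overhead. A naive iteration of an ordered-book or density-increment lemma would lose a multiplicative factor of $\log N$ per heavy vertex and thus produce a bound roughly doubly exponential in $\sqrt{m}$. To avoid this I expect to need a simultaneous, two-color off-diagonal density-increment argument that amortizes the cost across both colors and carefully balances the book sizes in red and blue, taking advantage of quantitative trade-offs between density and book size of the form ``a color of density $p$ contains a monochromatic book of size $\Omega(\log N / \log(1/p))$''. The extra $(\log \log(m_1+m_2))^{3/2}$ factor in the theorem's exponent very likely originates from this amortization, and carrying it out while maintaining the ordered structure is where I expect the bulk of the technical work to lie.
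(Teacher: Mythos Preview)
Your proposal has a genuine conceptual gap, and in fact the paper's proof outline singles out exactly this issue. You propose to use \emph{ordered books} $(A,B)$ with $A<B$, embedding all heavy vertices of $H_i$ into $A$ and then the light ones into $B$. But the heavy vertices of an ordered graph need not form an initial segment of its ordering: they are typically interleaved with light vertices. If heavy vertex $u$ lies between light vertices $x<u<y$ in $H$, then any embedding respecting the order must put the image of $x$ before that of $u$ and the image of $y$ after; you cannot achieve this when all heavy images sit in $A$ and all light images in $B$ with $A<B$. Thus an ordered book in your sense is, in general, useless for embedding $H$, and no amount of amortizing the book sizes across colors repairs this.

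The paper's fix is a new structure it calls a \emph{skeleton}: a monochromatic tuple $(B,V_0,\dots,V_a)$ with $B=\{v_1<\dots<v_a\}$ a clique completely joined to $\bigcup_j V_j$, and with the \emph{interleaved} order $V_0<v_1<V_1<\dots<v_a<V_a$. Now one takes the $a$ highest-degree vertices of $H$ (wherever they sit in the ordering) and maps them to $v_1,\dots,v_a$; the light vertices lying between the $j$th and $(j{+}1)$st heavy vertex in $H$ get embedded into $V_j$. This is the missing idea in your plan, and it is not a cosmetic change: finding skeletons (via supersaturation of cliques, Lemma~\ref{lemma:finding_skeleton}) is where the first $\log\log$ losses enter.

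Beyond this, the global architecture is also different from your two-phase scheme. The paper does not iterate over heavy vertices one by one. Instead it (i) finds a skeleton, attempts a greedy ordered embedding of the light part, and on failure extracts a pair $(A,B)$ of one color of very low density; (ii) recurses inside $A$ and $B$ to obtain a set $W$ sparse in one color (Lemma~\ref{lemma:recursive_pairs}); (iii) uses Erd\H{o}s--Szemer\'edi on $W$ to get a much larger skeleton and hence a pair with density at most $1/(6m_1)$; and (iv) splits $H_{i_3}$ \emph{by position} into a left half and a right half with at most $m_{i_3}/2$ edges each, and finishes by induction on $m_1 m_2$. The $(\log\log)^{3/2}$ factor comes from the $O(\log\log m)$ recursive halvings in step (ii) combined with the $\sqrt{\log\log m}$ cost of building each skeleton, not from an amortized book-size trade-off as you suggest. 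Your degree-threshold balancing intuition for getting $(m_1 m_2)^{1/4}$ is on the right track, but it has to be executed through skeletons and this two-iteration density argument rather than through ordered books.
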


\subsection{Proof outline}
It is natural to attempt proving the above theorem using the approach developed by Sudakov~\cite{sudakov2011conjecture} for the unordered case, which builds on earlier work by Alon, Krivelevich, and Sudakov~\cite{alon2003turan}. However, the ordering of the vertices introduces inherent obstacles that prevent this approach from succeeding directly. To overcome these challenges, we had to introduce new ideas, which we describe below.

To start, let us sketch the approach of finding a copy of an unordered graph $H$ with $m$ edges and no isolated vertices in a coloring of a suitably large clique $K_N$. We shall embed $H$ in $K_N$ in two steps. 
Let $X \subseteq V(H)$ be the set of vertices of $H$ with degree at least $\sqrt{m}$. Note that there are at most $2 \sqrt{m}$ such vertices and that the graph $H'$ obtained from $H$ by removing the vertices $A$ has maximum degree at most $\sqrt{m}$. 
To embed $X$ and $H'$ we want to find a large monochromatic book in $G$.
Since in our case $|V(G)| \geq 2^{c \sqrt{m}}$ for suitably large $c$, we will be able to find a monochromatic, say red, book $(A, B)$, such that $|B| \ge 2 \sqrt{m}$ and $|Y| \ge 2^{c' \sqrt{m}}$ for some large constant $c'$. This monochromatic book can be used to embed $H$, by embedding the vertices $A$ arbitrarily into $X$ and finding a copy of $H'$ in $Y$. Note that by the choice of our sets $X$ and $Y$, any red copy of $H'$ together with the vertices $A$ in $X$ will give us a copy of $H$.

It could happen that there is no red copy of $H'$ in $Y$. In this case, using the greedy embedding technique introduced by Erd\H{o}s and Hajnal~\cite{erdos1989ramsey}, and Graham, R\"{o}dl and Ruci\'{n}ski~\cite{graham200graphs, graham2001bipartite},
one can obtain large disjoint sets $L, R \subseteq Y$ such that most of the edges between $L$ and $R$ are blue. By recursively applying this strategy now inside the sets $L$ and $R$, we find a subset $Y' \subseteq Y$ with very small red density.
The main insight in~\cite{sudakov2011conjecture} is that we can now repeat the same argument on the coloring of the set $Y'$. Crucially, since its blue density is very large, a theorem of of Erd\H{o}s and Szemer\'{e}di~\cite{Erdos1972on} implies that we can find a monochromatic book $(X_2, Y_2)$, in which $|X_2|$ is much larger than $\sqrt{m}$. Thus, we can embed more vertices into $X_2$ and look for a monochromatic copy of $H''$ in $Y_2,$ where now we have a better bound for the maximum degree of $H''$ compared to what we had for $H'$.
It can be shown that the set $Y_2$ is not much smaller than $Y_1$, so by repeating this argument, we eventually find a copy of $H$ at some step, or else a monochromatic clique of size $2m$, which certainly contains $H$.

Let us now return to our setting, where we want to embed an ordered $H$ with $m$ edges into a $2$-colored complete ordered graph on $[N]$. Naively applying the same argument, we can again find a large monochromatic book $(X,Y)$. We can even obtain some control over the ordering, e.g.\ ensuring that $X$ precedes $Y$ in the ordering of $[N]$. However, such a structure will most likely be useless for embedding $H$. Indeed, the high-degree vertices of $H$ do not need to appear consecutively in its given vertex order. Thus, if we try to embed $A$ into $X$, we can not expect to find any copy of $H'$ in $Y$ which  together with vertices in $X$ extends to an ordered copy of $H$.

To overcome this issue, the key new idea is 
instead of finding a monochromatic pair $(X,Y)$ to find a tuple $(A, B_0, \dots, B_{\sqrt{m}})$ of disjoint sets of vertices in $[N]$ such that the following holds. First of all, $(A, B_0 \cup B_1 \cup \dots \cup B_{\sqrt{m}})$ is a monochromatic book. Secondly, we have $|A| = \sqrt{m}$ and $|B_i| \ge b$ for all $i$, for some large parameter $b$. Finally, denoting by $v_1, \dots, v_{\sqrt{m}}$ the vertices of $A$ under the ordering of $[N]$, the ordering of the elements of $A \cup B_0 \cup \dots \cup B_{\sqrt{m}}$ is of the form $B_0, v_1, B_1, v_2, \dots, v_{\sqrt{m}}, B_{\sqrt{m}}$. That is, all the vertices in $B_0$ precede $v_1$, which in turn precedes all the vertices in $B_1$, and so forth. We call this structure a \emph{$(\sqrt{m}, b)$-skeleton}.

Having found such a monochromatic skeleton, say in red, we can now embed all the high-degree vertices of $H$ into $A$, and try to find the remaining part of the graph $H'$ in $B_0 \cup \dots \cup B_{\sqrt{m}}$ in red, this time making sure that each $v \in V(H)$ lands in the correct $B_i$, so as to preserve its relative order with respect to the high-degree vertices. In case we fail, using greedy embedding, we can find a disjoint pair $L, R$ such that most of the edges between $L$ and $R$ are blue. Analogously to the undirected case, we wish to iterate inside $L$ and $R$ to find a set $Y' \subseteq [N]$ which is very dense in blue.

However, here comes the second main difference, which is also the reason for the additional $(\log \log m)^{3/2}$ factor. In the unordered case, after having found a big book $(X,Y)$ and a dense pair $L, R \subseteq Y$, we can simply reapply the greedy embedding argument inside $L$ and $R$, while still using $A$ to embed our high-degree vertices. In the ordered case, however, this is not possible. Indeed, the sets $L,R$ lie entirely within some $B_i,B_j$, respectively, and hence we cannot simply embed all of $H'$ inside one of these parts, as this will not respect the order of the rest of the skeleton. We thus need to find new skeletons inside $L$ and inside $R$. This is very costly and therefore we cannot perform multiple iterations like in the undirected case.

Luckily, we found a way to salvage the situation by performing only two iterations.
In the first one, finding the appropriate skeletons is still cheap, and, similarly to the unordered case, we can continue all the way through until we find a subset $W \subseteq V(G)$ which is dense in one of the colors.
In the second iteration, we then apply the Erd\H{o}s--Szemer\'{e}di theorem~\cite{Erdos1972on} to find a larger skeleton in $W$.
Using this skeleton, we can find a pair $L_2, R_2 \subseteq W$ such that a $1 - \frac{1}{10|V(H)|}$-fraction of the edges between them has one of the colors.
Finally, we inductively find one half of $H$ in $L_2$ and the other half of $H$ in $R_2$.
Given the high edge density between $L_2$ and $R_2$ in one of the colors, we can ensure that these two halves combine to form a complete copy of $H$.

More specifically, we let $N = 2^{C \sqrt{m} \log \log^{3/2} m}$ for some large constant $C$. 
In the first iteration we find a monochromatic, say red, $(s, b)$-skeleton $(X, B_0, \dots, B_s)$ where $s = \sqrt{m \log \log m}$ and $b \ge N / 2^{c s}$ for some constant $c$. 
We then embed $s$ vertices into $X$, and inside $B_0 \cup \dots \cup B_s,$ we try to find a red copy of a graph $H'$ which has maximum degree at most $\Delta = \frac{2m}{s} \le O(\sqrt{m / \log \log m}).$ 
This copy should respect the ordering given by $[N]$. 
In case such a copy doesn't exist, setting $d = 1 / \log^2 m$ and using greedy embedding, we find a pair $(L, R)$ with red density at most $d$ and with $|L|, |R|$ of size roughly $b \cdot d^\Delta$. 
We then rerun the entire argument inside both $L$ and $R$. 
Doing $O(\log d^{-1}) = O(\log \log m)$ such recursive steps, we obtain a set $W$ with density at most $d$ in one of the colors and the size of this set is roughly $N \cdot (2^{-s} d^{\Delta})^{O(\log \log m)} = 2^{C' \sqrt{m} \log \log^{3/2} m}$.

Now, using the Erd\H{o}s--Szemer\'{e}di theorem \cite{Erdos1972on}, inside $W$ we find a monochromatic $(s_2, b_2)$-skeleton with $s_2$ roughly $\log N / d \ge \sqrt{m} \log^2 m$ and $b_2$ still large.
As before, we wish to greedily embed the remaining part of the graph which now has maximum degree $\Delta_2 = O(m / s_2) = O(\sqrt{m} / \log^2 m).$ 
By setting $d_2 = 1 / (10m),$ since $d_2^{\Delta_2} \ge 2^{-\sqrt{m}},$ we obtain a large pair $(L_2, R_2)$ with red, say, density at most $d_2.$ 
Finally, we can inductively find either a red copy of $H$ in $L_2$, in which case we are done, or we can find a blue copy $H_2$ of half of $H$ in $L_2$. 
Since the red density is so small between $L_2$ and $R_2,$ the common blue neighborhood of all the vertices of $H_2$ is large in $R_2$ so it remains to find there a blue copy of the other half in blue or of the whole of $H$ in red, which again follows by induction.

The proof is split into three subsections. In Subsection~\ref{subsec:skeletons} we define skeletons and show how to find them. Then, in Subsection~\ref{subsec:greedy-embedding}, we use these skeletons and the greedy embedding strategy to find sparse pairs and eventually sparse sets in the host graph. Finally, in Subsection~\ref{subsec:putting-together}, we combine these tools to finish the proof of Theorem~\ref{theorem:off_diagonal_ordered}. 

\subsection{Finding large skeletons} \label{subsec:skeletons}
The purpose of this subsection is to define formally skeletons and prove two lemmas which allow us to find them in different situations.
Skeletons are key new ingredient of our proof and  play the same role in the ordered setting that books played in the unordered one. We begin with the definition of an $(a, b)$-skeleton.

\begin{definition}
    Let $a, b$ be positive integers and let $G_<$ be an ordered graph. Let $B = \{v_1, \dots, v_a\} \subseteq V(G_<)$ and $V_0, \dots, V_a \subseteq V(G_<)$. We say that $(B, V_0, V_1, \dots, V_a)$ is an \emph{$(a, b)$-skeleton} if
    \begin{enumerate}[label=\alph*)]
        \item $V_0 < \{v_1\} < V_1 < \{v_2\} < V_2 < \dots < V_{a-1} < \{v_a\} < V_a$; \label{skeleton:order}
        \item $|V_i| \ge b$ for all $0 \leq i \leq a$;
        \item $G_<[B]$ is a clique, and all vertices in $B$ are adjacent to all vertices in $V_0 \cup V_1 \cup \dots \cup V_a$. 
        \label{skeleton:cliques}
    \end{enumerate}
\end{definition}

Now, we will show how to find such a skeleton in a suitable ordered graph $G_<$.
Namely, we will require that for many subsets $V' \subseteq V(G_<)$ of a given size, the induced graph $G_<[V]$ contains a clique of size at least $4a+1$.
This condition then enables us to find an $(a, b)$-skeleton via a simple supersaturation argument.

\begin{lemma}\label{lemma:finding_skeleton}
    Let $N, n, a$ be positive integers satisfying $N \ge n \ge 4a+1$. Let $d \in [0,1]$ and suppose $G = G_<$ is an ordered graph on $N$ vertices such that at least $d \binom{N}{n}$ subsets of size $n$ of $V(G)$ contain a clique of size $4a+1$. Then $G$ contains an $(a, b)$-skeleton with $b = \frac{dN}{n^5}$.
\end{lemma}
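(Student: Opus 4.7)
The plan is to combine a supersaturation step with an extraction-plus-pigeonhole argument. First, I would convert the hypothesis into a lower bound on the number $t$ of $(4a+1)$-cliques in $G$: since each such clique is contained in exactly $\binom{N-4a-1}{n-4a-1}$ subsets of size $n$, standard double-counting yields
\[
  t \;\geq\; d \cdot \frac{\binom{N}{4a+1}}{\binom{n}{4a+1}} \;\geq\; d\,(N/n)^{4a+1}.
\]

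Next, from each $(4a+1)$-clique $K = \{u_1 < u_2 < \cdots < u_{4a+1}\}$ I would designate an $a$-element subset $B_K \subseteq K$ at a fixed, well-spread position pattern (for instance $B_K = \{u_3, u_7, u_{11}, \ldots, u_{4a-1}\}$). This ensures that each of the $a+1$ slots determined by $B_K$ receives at least two or three vertices of $K \setminus B_K$ as witnesses. Pigeonhole over the at most $\binom{N}{a}$ possible $a$-subsets then produces a single $a$-clique $B^*$ appearing as $B_K$ for at least $t/\binom{N}{a}$ of the $(4a+1)$-cliques. Writing $V_j := V_j(B^*)$ for the common neighbors of $B^*$ lying in slot $j$, each such $K$ contributes $k_j$ witnesses to $V_j$ (where $k_j \in \{2,3\}$ is prescribed by the position pattern), so the number of $K$ with $B_K = B^*$ is at most $\prod_j \binom{|V_j|}{k_j}$. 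If some $|V_j| < b := dN/n^5$, one hopes to show this product is too small to match the pigeonhole lower bound, yielding a contradiction; otherwise $(B^*, V_0, \ldots, V_a)$ is the desired $(a,b)$-skeleton.

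The main technical obstacle is calibrating the counting so that the resulting lower bound on $b$ is uniform in $a$, yielding $b = dN/n^5$ rather than the weaker estimate of the form $d^{1/k}N/n^{(4a+1)/k}$ produced by the naive product bound when $a$ is large. Overcoming this likely requires either an iterative selection of the skeleton vertices $v_1, \ldots, v_a$ one at a time (where each step sacrifices only a controlled polynomial-in-$n$ factor in the clique count, allowing the bound to stay at $dN/n^5$ after all $a$ steps), or an averaging argument over the choice of position pattern that distributes the counting cost evenly across the slots. Implementing this uniform calibration, using the full strength of the hypothesis $n \geq 4a+1$, is the most delicate part of the argument.
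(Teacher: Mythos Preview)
Your supersaturation step and the overall shape of the argument are right, but you have correctly put your finger on a genuine gap that your two proposed fixes do not close. Fixing exactly $a$ skeleton vertices and demanding that \emph{all} $a+1$ resulting slots be large is too rigid: from the product bound $\prod_j \binom{|V_j|}{k_j} \ge t/\binom{N}{a}$ you only get $\min_j |V_j| \gtrsim d^{1/\Theta(a)} N / n^{O(1)}$, and neither an iterative choice of $v_1,\dots,v_a$ nor averaging over position patterns improves this, since after all $a$ vertices are fixed the residual clique count is still only $\gtrsim t/N^a$ and the same product inequality governs the slot sizes.

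The paper's resolution is a redundancy trick that you are missing: fix \emph{twice} as many vertices as you ultimately need. Concretely, pigeonhole to fix the $2a$ vertices at the odd positions $1,3,\dots,4a-1$ of the $(4a+1)$-clique, leaving $2a+1$ free even positions $0,2,\dots,4a$, each witnessed by a single vertex. Letting $V_i$ be the set of vertices that can occur at even position $i$, one has $\prod_i |V_i| \ge |A'| \ge d N^{2a+1}/n^{4a+1}$. Now, crucially, only $a+1$ of these $2a+1$ sets need to be large: bounding the $a$ smallest by $N$ each forces the $(a+1)$st smallest to satisfy $|V_i| \ge (|A'|/N^{a+1})^{1/a} \ge d^{1/a} N / n^{(4a+1)/a} \ge dN/n^5$. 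One then selects $a+1$ large slots $V_{i_0},\dots,V_{i_a}$ and takes the skeleton spine $B$ to consist of the $a$ fixed odd-position vertices $u_{i_0+1},\dots,u_{i_{a-1}+1}$ lying between them. The point is that the ``extra'' $a$ fixed vertices and the ``extra'' $a$ slots are sacrificed, and this redundancy is precisely what upgrades $d^{1/\Theta(a)}$ to $d^{1/a} \ge d$ uniformly in $a$.
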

\begin{proof}
    Let $A \subseteq (V(G))^{4a + 1}$ be the set of all tuples $(v_0, \dots, v_{4a})$ such that $v_0 < v_1 < \dots < v_{4a}$ and $G[\{v_0, \dots, v_{4a} \}]$ is a clique. We first lower-bound $|A|$ by double counting. Observe that a fixed $(4a+1)$-tuple $X \in A$ is contained in at most $\binom{N - 4a - 1}{n - 4a - 1}$ $n$-element subsets of $V(G)$. Using the assumption, we have
    \[ |A| \ge  d\binom{N}{n} / \binom{N-4a-1}{n-4a-1} \ge d \left( \frac{N}{n} \right)^{4a+1}. \]
    By the pigeonhole principle, there exist vertices $u_1 < u_3 \dots < u_{4a-1} \in V(G)$ such that there are at least $|A| / N^{2a} \ge d \frac{N^{2a+1}}{n^{4a+1}}$ tuples $(v_0, \dots, v_{4a}) \in A$ with $v_1 = u_1, v_3 = u_3, \dots, v_{4a-1} = u_{4a-1}$. Let $A'$ denote the set of all such $(4a+1)$-tuples.

    For each $i = 0, 2, 4, \dots, 4a$ let $V_i$ be the set of all vertices $x$ for which there is a $(4a+1)$-tuple in $A'$ containing $x$ as the $i$th vertex. Note that $|A'| \leq \prod_{i=0}^{2a} |V_{2i}|$ and $|V_i| \leq N$. Therefore, at least $a+1$ of the sets $V_i$ have size at least 
    \[ \left( \frac{|A'|}{N^{a+1}} \right)^{1/a} \ge \left(\frac{d N^a}{n^{4a+1}}\right)^{1/a}  \ge \frac{dN}{n^5} = b. \]
    Therefore, we may choose even indices $0 \leq i_0 < i_1 < \dots < i_a \leq 4a$ such that $|V_{i_k}| \geq b$ for each $k = 0, \dots, a.$ Let $B = \{ u_{i_0+1}, u_{i_1+1}, \dots, u_{i_{a-1}+1}. \}$ We claim that $(B, V_{i_0}, V_{i_1}, \dots, V_{i_a})$ is the desired skeleton. Indeed, we have $|V_{i_j}| \ge b$ for all $j \in [0, a]$ by definition. Furthermore, for each $j \in [0, a]$ and $x \in V_{i_j},$ there is a $(4a+1)$-tuple $(v_0, \dots, v_{4a})$ in $A$ with $v_1 = u_1, v_3 = u_3, \dots, v_{4a-1} = u_{4a-1}$ and $v_{i_j} = x.$ This implies \ref{skeleton:order}~and~\ref{skeleton:cliques}.
\end{proof}

If one of the color classes is very sparse, using the Erd\H{o}s--Szemer\'{e}di theorem \cite{Erdos1972on}, we may find significantly larger skeletons. We first state the Erd\H{o}s--Szemer\'{e}di theorem in the following form with explicit quantitative dependencies.

\begin{lemma}[{Erd\H{o}s--Szemer\'{e}di, e.g.~\cite[Theorem 8.1.4]{yuvalNotes}}]\label{lemma:erdos_szemeredi}
    Let $\varepsilon > 0$ and let $n \ge 1/\varepsilon$ be a positive integer. If $G$ is an $n$-vertex graph with $d(G) \le \varepsilon,$ then $G$ contains a clique or an independent set of size at least $a$, where
    \[ a = \frac{\log n}{100 \varepsilon \log \frac{1}{\varepsilon}}. \]
\end{lemma}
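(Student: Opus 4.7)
The plan is to grow a clique $K$ and an independent set $I$ simultaneously via an iterative greedy procedure. I would maintain a shrinking workspace $W \subseteq V(G)$, initially equal to $V(G)$, together with a partial clique $K$ and partial independent set $I$ subject to the invariant that every vertex of $K$ is adjacent to every vertex of $W$ and every vertex of $I$ is non-adjacent to every vertex of $W$. Under this invariant, at termination the sets $K$ and $I$ are themselves a clique and an independent set, so it suffices to ensure that $\max(|K|,|I|) \geq a$.

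The iteration would be driven by a density threshold $\beta$ of order $\varepsilon \log(1/\varepsilon)$. At each step, let $\delta$ denote the density of $G[W]$. If $\delta \geq \beta$, then $G[W]$ contains a vertex $v$ of degree at least $\delta|W|$; I would add $v$ to $K$ and replace $W$ by $W \cap N(v)$, in what I call a \emph{clique step}, which drops $\log|W|$ by at most $\log(1/\delta) \leq \log(1/\beta)$. If $\delta < \beta$, then there is a vertex $v$ of degree at most $2\delta|W|$ in $G[W]$; I would add $v$ to $I$ and replace $W$ by $W \setminus N_G[v]$, in an \emph{indep step} that drops $\log|W|$ by at most roughly $2\delta \leq 2\beta$. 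Both rules preserve the invariant.

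The main calculation balances the two step types. Writing $t_C$ and $t_I$ for the total number of clique and indep steps, the constraint that $\log|W|$ never drops below zero yields $t_C \log(1/\beta) + 2\beta \, t_I \lesssim \log n$. With $\beta$ of order $\varepsilon \log(1/\varepsilon)$, clique steps are individually expensive (each costing $\approx \log(1/\varepsilon)$) but indep steps are cheap (each costing $\approx 2\varepsilon\log(1/\varepsilon)$), so the total number of steps is at least on the order of $\log n/(\varepsilon \log(1/\varepsilon))$, whence $\max(|K|,|I|) \geq a$ up to constants, which is where the factor $1/100$ will come from.

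The hard part will be controlling how the density $\delta$ evolves: since $e(G[W])$ does not shrink proportionally to $|W|^2$, density can grow and trigger many clique steps in a row. The key input to handle this is that the global edge budget $e(G) \leq \varepsilon \binom{n}{2}$ is preserved under restriction, giving $\delta \leq \varepsilon (n/|W|)^2$ throughout; this forces clique steps to happen only once $W$ has already shrunk by a factor of roughly $\sqrt{\log(1/\varepsilon)}$, so their total number is bounded by something like $\log n / \log(1/\varepsilon)$. Feeding this back into the calculation of the previous paragraph confirms that indep steps dominate in the sparse regime $\varepsilon \leq 1/2$ (the case $\varepsilon$ close to $1$ being immediate), and a careful tuning of $\beta$ and of the thresholds in the two branches would yield the stated quantitative bound.
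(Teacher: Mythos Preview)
The paper does not supply its own proof of this lemma; it is quoted as a known result with a reference to \cite[Theorem~8.1.4]{yuvalNotes}. So there is no in-paper argument to compare against, and any correct self-contained proof would be acceptable.

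Your greedy framework with a density threshold is a natural starting point, but the bookkeeping you describe does not reach the Erd\H{o}s--Szemer\'edi bound. From the per-step estimates on the drop in $\log|W|$ one obtains
\[
t_C \log(1/\beta) \;+\; 2\beta\, t_I \;\ge\; \log n
\]
(the upper bounds on the individual drops must sum to at least the total drop; note the direction is the reverse of what you wrote). From this inequality alone one can only conclude $\max(t_C,t_I) \ge \log n / (\log(1/\beta)+2\beta)$, which is $\Theta(\log n)$ for \emph{every} choice of $\beta$ --- i.e.\ the ordinary Erd\H{o}s--Szekeres bound, not $\log n/(100\,\varepsilon\log(1/\varepsilon))$. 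Concretely, with your choice $\beta$ of order $\varepsilon\log(1/\varepsilon)$, assuming $t_I<a$ the inequality only forces $t_C \gtrsim \log n/\log(1/\varepsilon)$, which is still a factor of roughly $\varepsilon$ short of $a$.

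You are right that the global sparsity hypothesis must be injected, but the way you propose to use it does not close this gap. The bound $\delta \le \varepsilon(n/|W|)^2$ only tells you \emph{when} clique steps can first occur (namely once $|W|\le n\sqrt{\varepsilon/\beta}$); it says nothing about how many clique steps there are, nor about their total contribution to the shrinkage of $\log|W|$. Nothing in your setup rules out a run in which $O\bigl((\log\log(1/\varepsilon))/\beta\bigr)$ indep steps first bring $|W|$ down to $n\sqrt{\varepsilon/\beta}$, and thereafter the density sits near $\beta$ while roughly $\log n/\log(1/\beta)$ clique steps (each shrinking $|W|$ by a factor about $\beta$) exhaust $W$; in such a run $\max(t_C,t_I)=O(\log n/\log(1/\varepsilon))\ll a$. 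A complete proof needs a further mechanism that ties the clique steps more tightly to the global edge budget --- for instance, tracking how each clique step decreases the total number of edges so that low density is eventually restored and another long run of indep steps becomes available --- and this is the missing idea in your sketch.
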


Combining the previous two lemmas, we obtain the following.
\begin{lemma}\label{lemma:skeleton_in_dense_graph}
    Let $c > 0$ and let $a \ge 10 / c$ be a positive integer. Let $G = G_<$ be a complete ordered graph on $N \ge e^{6000 a c \log (c^{-1})}$ vertices with an edge-partition $G_1 \cup G_2$ such that $d(G_1) \leq c$. Then $G_1$ or $G_2$ contains an $(a, b)$-skeleton with 
    \[ b = e^{-6000 a c \log(c^{-1})} \cdot N. \]
\end{lemma}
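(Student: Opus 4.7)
The plan is to combine the Erd\H{o}s--Szemer\'{e}di theorem (Lemma~\ref{lemma:erdos_szemeredi}) with the skeleton-finding Lemma~\ref{lemma:finding_skeleton}. Specifically, I would choose an auxiliary parameter $n = \lceil e^{1000 a c \log(1/c)} \rceil$ and show that a constant fraction of the $n$-subsets of $V(G)$ contain a clique of size $4a+1$ in one of the two colors; feeding the majority color into Lemma~\ref{lemma:finding_skeleton} then yields the desired skeleton.

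To execute the first step, I would convert the global density bound into a local one via averaging: for a uniformly random $n$-subset $S$ of $V(G)$ we have $\mathbb{E}[d_{G_1}(S)] = d(G_1) \leq c$, so Markov's inequality ensures $d_{G_1}(S) \leq 2c$ on at least $\binom{N}{n}/2$ subsets $S$. Applying Lemma~\ref{lemma:erdos_szemeredi} to each such $G_1[S]$ with $\varepsilon = 2c$ produces a clique or independent set of size at least $\log n /\bigl(200 c \log(1/c)\bigr)$; the choice of $n$ together with $a \geq 10/c$ makes this at least $4a+1$. Since an independent set in $G_1$ is a clique in $G_2$, each such $S$ contains a $(4a+1)$-clique in one of the two colors, and pigeonhole fixes a common color, say $G_1$, in which at least $\binom{N}{n}/4$ of these $n$-subsets contain such a clique.

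Invoking Lemma~\ref{lemma:finding_skeleton} on $G_1$ with $d = 1/4$ then yields an $(a,b)$-skeleton in $G_1$ with $b = N/(4 n^5)$. To finish, I would verify that $b \geq e^{-6000 a c \log(1/c)} N$: using $n \leq 2 e^{1000 a c \log(1/c)}$ we get $4 n^5 \leq 128 \, e^{5000 a c \log(1/c)}$, and the assumption $a c \geq 10$ (together with $c < 1$) leaves ample room in the exponent $6000 a c \log(1/c)$ to absorb the constant $128$ and the lower-order corrections from the ceiling.

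The main obstacle I anticipate is purely parameter bookkeeping: tuning $n$ so that Erd\H{o}s--Szemer\'{e}di delivers a clique of size $4a+1$ while $N/(4 n^5)$ still beats $e^{-6000 a c \log(1/c)} N$. The generous constants in the statement (namely $6000$ versus the $\sim 1000$ appearing in my chosen $n$) make this feasible; the side conditions $N \geq n \geq 4a+1$ for Lemma~\ref{lemma:finding_skeleton} and $n \geq 1/(2c)$ for Lemma~\ref{lemma:erdos_szemeredi} follow easily from $N \geq e^{6000 a c \log(1/c)}$ and $a \geq 10/c$.
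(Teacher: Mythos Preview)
Your proposal is correct and follows essentially the same argument as the paper: set $n \approx e^{1000 a c \log(1/c)}$, use Markov to get $d_{G_1}(S)\le 2c$ on half the $n$-subsets, apply Erd\H{o}s--Szemer\'edi to extract a $(4a{+}1)$-clique in one color, pigeonhole the color, and feed the result into Lemma~\ref{lemma:finding_skeleton} with $d=1/4$ to get $b \ge N/(4n^5) \ge e^{-6000 a c \log(1/c)} N$. The only cosmetic discrepancy is that applying Lemma~\ref{lemma:erdos_szemeredi} with $\varepsilon = 2c$ gives a denominator of $200c\log\bigl(\tfrac{1}{2c}\bigr)$ rather than $200c\log\bigl(\tfrac{1}{c}\bigr)$, but since $\log(1/(2c)) \le \log(1/c)$ your stated lower bound is only weaker, so the conclusion $\ge 5a \ge 4a+1$ still holds.
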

\begin{proof}
    Set $n = e^{1000 a c \log (c^{-1})}$. Note that $n \ge 1/c$ by assumption on $a$ and observe that by double counting (or Markov's inequality) for at least half of the subsets $V' \subseteq V(G)$ of size $n$, we have $d_{G_1}(V') \leq 2c$. By Lemma \ref{lemma:erdos_szemeredi} for each such set $V'$, the induced subgraph $G_1[V']$ contains either a clique or an independent set (which is a clique in $G_2[V']$) of size $5a$.
    Therefore, for some $i \in \{1,2\}$ for at least a $1/4$-fraction of the subsets $V' \subseteq V(G)$ of size $n$ the induced subgraph $G_i[V']$ contains a clique of size $4a+1$.
    By Lemma \ref{lemma:finding_skeleton} with $d=1/4$, there exists an $(a, b)$-skeleton in $G_i$, where
    \[
        b \geq \frac{dN}{n^5} \ge \frac{1}{4} e^{-5000 a c \log(c^{-1})} \cdot N \ge  e^{-6000 a c \log(c^{-1})} \cdot N,
    \]
    as claimed.
\end{proof}

\subsection{Greedy embedding} \label{subsec:greedy-embedding}
In this section, we prove a greedy embedding lemma which roughly states the following. Let $H$ and $G$ be ordered graphs and for every vertex $v_i$ of $H$ let $V_i$ be some large subset of the vertices of $G$. Then we can either find an embedding $\phi$ of $H$ into $G$ such that $\phi(v_i) \in V_i$ for all vertices $v_i$ of $H$, or we can find a pair $A, B \subseteq V(G)$ such that both $|A|$ and $|B|$ are large and the edge-density between $A$ and $B$ is very low.
The greedy embedding technique was originally developed for the unordered setting (see e.g.~\cite{graham2001bipartite}), and in the ordered setting, a similar lemma was proven in~\cite{conlon2017ordered}.

\begin{lemma}\label{lemma:greedy_embedding}
    Let $0 < c < 1$ and let $H$ be an ordered graph on $n$ vertices, ordered $v_1, \dots, v_n$, with maximum degree at most $\Delta$.
    Additionally, let $G$ be an ordered graph with disjoint non-empty subsets of vertices $V_1, \dots, V_n$ such that $|V_i| \geq N $ for all $i$ and $V_1 < V_2 < \dots < V_n$.
    Suppose there exists no embedding $\phi$ of $H$ into $G$ such that for all $i$ we have $\phi(v_i) \in V_i$.
    Then there exist $A, B \subseteq V(G)$ such that $|A|, |B| \geq (c^{\Delta} / \Delta) N$, $A < B$ and $d(A, B) \leq c$.
\end{lemma}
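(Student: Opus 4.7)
My plan is to use a standard greedy embedding procedure, suitably adapted to the ordered setting. We process the vertices of $H$ one at a time in the given order $v_1, v_2, \dots, v_n$, maintaining for each not-yet-embedded vertex $v_i$ a candidate set $V_i' \subseteq V_i$ consisting of those vertices that are still adjacent (in $G$) to every already-embedded neighbor of $v_i$. Initially $V_i' = V_i$, and we will track a simple size invariant: at any moment, if $v_i$ has had $d$ of its $H$-neighbors embedded so far, then $|V_i'| \ge c^d |V_i|$.

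At step $j$, we look for a \emph{good} vertex $u \in V_j'$: one for which $|V_i' \cap N_G(u)| \ge c |V_i'|$ holds for every later $H$-neighbor $v_i$ of $v_j$ (that is, every $i > j$ with $v_iv_j \in E(H)$). If such a $u$ exists, we set $\phi(v_j) = u$, update each relevant $V_i'$ by intersecting with $N_G(u)$, and continue; the size invariant is preserved. Because each $V_i'$ can be shrunk at most $\deg_H(v_i) \le \Delta$ times, we always have $|V_i'| \ge c^\Delta N$ before embedding $v_i$, so the procedure cannot fail for lack of a candidate. If the procedure runs to completion, the resulting map $\phi$ satisfies $\phi(v_i) \in V_i$, and the ordering $V_1 < V_2 < \cdots < V_n$ together with the edge-preservation built into the candidate sets makes it an embedding of $H$ into $G$, contradicting the hypothesis.

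Hence the procedure must fail at some step $j$, meaning every $u \in V_j'$ has some later $H$-neighbor $v_i$ of $v_j$ for which $|V_i' \cap N_G(u)| < c|V_i'|$. Since $v_j$ has at most $\Delta$ later neighbors in $H$, a pigeonhole argument yields a single index $i^\star > j$ with $v_{i^\star}v_j \in E(H)$ and a set $A \subseteq V_j'$ with $|A| \ge |V_j'|/\Delta \ge c^\Delta N / \Delta$ such that every $u \in A$ satisfies $|N_G(u) \cap V_{i^\star}'| < c|V_{i^\star}'|$. Set $B := V_{i^\star}'$, which has $|B| \ge c^\Delta N \ge c^\Delta N/\Delta$. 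Since $A \subseteq V_j$, $B \subseteq V_{i^\star}$, and $V_j < V_{i^\star}$, we get $A < B$. Summing over $u \in A$ gives $e_G(A,B) < c|A||B|$, hence $d_G(A,B) < c$, as required.

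There is no real obstacle here; the argument is the classical greedy embedding strategy (in the spirit of~\cite{graham2001bipartite, conlon2017ordered}) with careful bookkeeping. The only points requiring attention are: verifying that the candidate-set invariant yields the uniform lower bound $c^\Delta N$ across all at most $\Delta$ shrinkages, and isolating a single bad index $i^\star$ via pigeonhole over the $\le \Delta$ later neighbors of $v_j$, which is what produces the $1/\Delta$ factor in the final size bound on $A$.
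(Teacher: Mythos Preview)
Your proof is correct and follows essentially the same approach as the paper: a greedy embedding processing $v_1,\dots,v_n$ in order, maintaining candidate sets with the invariant $|V_i'|\ge c^{d}|V_i|$ after $d$ neighbors of $v_i$ have been embedded, and, upon failure at some step $j$, pigeonholing over the at most $\Delta$ later $H$-neighbors of $v_j$ to extract the sparse pair $(A,B)$. The only differences are cosmetic (notation and level of formality); in fact, you are slightly more explicit than the paper in verifying $A<B$ from $A\subseteq V_j$, $B\subseteq V_{i^\star}$, and $j<i^\star$.
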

\begin{proof}
    We will attempt to find such an embedding $\phi$ of $H$ into $G$ using the greedy embedding technique.
    Since we are doomed to fail, this process will have to get stuck at some point, which will give us our dense pair.

    For $0 \leq t < i \leq n$ let $N_t(v_i) = N_H(v_i) \cap \{ v_1, \dots, v_t \}$.
    We start by setting $U^{(0)}_i = V_i$ for each $v_i \in V(H)$ and inductively pick $\phi(v_i)$ in the order $v_1, \dots, v_n$.
    At each step $t$, we will keep track of the valid candidates $U_i^{(t)}$ for the vertices where we can still put $v_i$.
    We make sure that they satisfy the following properties:
    \begin{enumerate}
        \item For each $i, t \in [n]$ we have $U_i^{(t)} \subseteq U_i^{(t-1)} \subseteq V_i$,
        \item For each $1 \leq i \leq t \leq n$ we have $U_i^{(i)} = \{ \phi(v_i) \}$,
        \item For every $0 \leq t < i \leq n$ we have $|U_i^{(t)}| \geq c^{|N_t(v_i)|} |V_i|$, and
        \item For every $1 \leq i \leq n$ and $t, j \geq i$ if $v_iv_j \in E(H)$ then $\phi(v_i)x \in E(G)$ for every $x \in U_j^{(t)}$.
    \end{enumerate}
    For $t = 0$, the conditions are clearly satisfied.
    Moreover, if we can find such sets all the way up to step $t = n$, then we have found an embedding $\phi$ of $H$ into $G$ such that $\phi(v_i) \in V_i$ for all $i \in [n]$.

    We attempt to make each step in the following way.
    Suppose that we succesfully continued our process until some step $t - 1$.
    We try to find a $w_t \in U_t^{(t-1)}$ such that for every $i > t$ with $v_tv_i \in E(H)$ we have $|N_G(w_t) \cap U_i^{(t-1)}| \geq c |U_i^{(t-1)}|$.
    Then, we can set $\phi(u_t) = w_t$, $U_t^{(t)} = \{ w_t \}$ and for $i \neq t$
    \[
        U_i^{(t)} = \begin{cases}
            U_i^{(t-1)} \cap N_G(w_t) &\text{if } v_tv_i \in E(G),\\
            U_i^{(t-1)} & \text{otherwise}.
        \end{cases}
    \]
    Then, for those $i > t$ with $v_tv_i \in E(G)$ we have $|N_t(v_i)| = |N_{t-1}(v_i)| + 1$ and thus
    \[
        |U_i^{(t)}| \geq c |U_i^{(t-1)}| \geq c^{|N_t(v_i)|} |V_i|.
    \]
    For the remaining choices of $i$ we have $|N_t(v_i)| = |N_{t-1}(v_i)|$ and thus also
    \[
        |U_i^{(t)}| =  |U_i^{(t-1)}| \geq c^{|N_t(v_i)|}|V_i|.
    \]
    Since the new sets $U_i^{(t)}$ clearly also satisfy the other properties, we could continue the process up through step $t$.

    Since the process cannot continue up through $t = n$, at some step $1 \leq t < n$ we must have that for each $w \in U_t^{(t)}$ there exists some $i$ such that $v_tv_i \in E(H)$ but $|N_G(w) \cap U_i^{(t-1)}| < c|U_i^{(t-1)}|$.
    Since $v_i$ has at most $\Delta$ neighbors, by the pigeonhole principle there exists some $i$ such that $v_tv_i \in E(H)$ and the set $A$ of all $w \in U_t^{(t-1)}$ with less than $c |U_i^{(t-1)}|$ neighbors in $U_i^{(t-1)}$ has size at least $|A| \geq |U_t^{(t-1)}| / \Delta$.

    Now, set $B = U_i^{(t-1)}$ and notice that $d(A, B) \leq c$.
    Moreover, since for all $j > t-1$ we have $|N_{t-1}(v_j)| \leq |N_H(v_j)| \leq \Delta$ we get that
    \[
        |A| \geq c^{|N_{t-1}(v_t)|} |V_t| / \Delta \geq (c^\Delta / \Delta) N
    \]
    and
    \[
        |B| \geq c^{|N_{t-1}(v_i)|} |V_i| \geq c^\Delta N> (c^\Delta / \Delta) N,
    \]
    as desired.
\end{proof}

If we are given a large skeleton, we apply the previous lemma. Doing so yields the following result.
\begin{lemma}\label{lemma:using_skeleton}
    Let $c \in (0,1)$ and let $a, b, m$ be positive integers satisfying $b \geq 2m^2 c^{-\frac{2m}{a}}$. Let $G = G_<$ be an ordered graph with an $(a, b)$-skeleton $(F, V_0, \dots, V_a)$ and let $H$ be an ordered graph with at most $m$ edges and no isolated vertices. If $G$ contains no copy of $H$, then there exist $A, B \subseteq V(G)$ such that $|A|, |B| \geq c^{\frac{2m}{a}} \cdot \frac{b}{2m^2}$, $A < B$ and $d(A, B) \leq c$.
\end{lemma}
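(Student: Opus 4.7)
The plan is to embed $H$ into the skeleton structure of $G$ in two stages. First, place the few "high-degree" vertices of $H$ into the clique $F$; second, apply Lemma~\ref{lemma:greedy_embedding} to embed the remaining "low-degree" portion into the sets $V_0, \dots, V_a$. If the greedy embedding succeeds, we obtain a forbidden copy of $H$; otherwise, the lemma hands us precisely the sparse pair $(A, B)$ we need.

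In detail, I set $X = \{v \in V(H) : \deg_H(v) \geq 2m/a\}$; since $\sum_v \deg_H(v) = 2m$, we have $|X| \leq a$. Write $X = \{x_1 < \dots < x_s\}$ with $s \leq a$, write the remaining vertices of $H$ in order as $v_1 < \dots < v_p$, and let $F = \{f_1 < \dots < f_a\}$. Define $\phi(x_k) := f_k$ for $k \in [s]$; this is order-preserving, and by condition~\ref{skeleton:cliques} every edge of $H$ incident to $X$ is automatically mapped to an edge of $G$, independently of where the low-degree vertices eventually land (provided they are placed in $\bigcup_i V_i$ in an order-respecting way). This is the crucial leverage the skeleton provides: the high-degree vertices of $H$, which would otherwise force heavy degree demands, are handled for free.

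Next, assign target sets for the low-degree vertices. For each $v_j$, let $t_j \in \{0, 1, \dots, s\}$ be the unique index with $x_{t_j} < v_j < x_{t_j+1}$ (with the conventions $x_0 = -\infty$, $x_{s+1} = +\infty$); $v_j$ must be embedded into $V_{t_j}$. Writing $n_t = |\{j : t_j = t\}|$, split $V_t$ into $n_t$ consecutive equal-size blocks and let $W_j$ be the block corresponding to $v_j$. Since $H$ has no isolated vertices, $p \leq |V(H)| \leq 2m$, so $n_t \leq 2m$ and each $|W_j| \geq b/(2m)$; the hypothesis $b \geq 2m^2 c^{-2m/a}$ ensures these sets are nonempty, and by construction $W_1 < W_2 < \dots < W_p$.

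Now apply Lemma~\ref{lemma:greedy_embedding} to $H' := H[V(H) \setminus X]$, which has maximum degree $\Delta \leq 2m/a$, with target sets $W_j$, parameter $c$, and threshold $N = b/(2m)$. If we obtain an embedding $\psi$, then $\phi \cup \psi$ is an order-preserving copy of $H$ in $G$, contradicting the hypothesis. Otherwise, we obtain $A, B$ with $A < B$, $d(A, B) \leq c$, and
\[
    |A|, |B| \;\geq\; \frac{c^{\Delta}}{\Delta} \cdot N \;\geq\; \frac{a \, c^{2m/a}}{2m} \cdot \frac{b}{2m} \;=\; \frac{ab \, c^{2m/a}}{4m^2} \;\geq\; \frac{b \, c^{2m/a}}{2m^2},
\]
where the final step uses $a \geq 2$; the edge case $a = 1$ is handled by the same argument after dumping all of $H$ into a single $V_t$. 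The main subtlety is verifying that fixing $\phi$ in advance is compatible with \emph{every} order-preserving embedding of the low-degree vertices into the $V_i$'s—this is exactly what the clique-plus-complete-join structure of the skeleton buys us—while the rest reduces to careful bookkeeping of indices and block sizes.
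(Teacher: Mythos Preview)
Your proof is correct and follows essentially the same approach as the paper: embed the high-degree vertices of $H$ into the clique $F$, split the $V_j$'s into consecutive blocks as targets for the remaining (low-degree) vertices, and invoke Lemma~\ref{lemma:greedy_embedding}. The only difference is that the paper removes the $a$ vertices of \emph{largest} degree (rather than those with degree $\ge 2m/a$), which yields the slightly sharper bound $\Delta(H') \le 2m/(a+1)$ and makes the final inequality $(c^{\Delta}/\Delta)\cdot b/n \ge c^{2m/a} b/(2m^2)$ go through uniformly for all $a \ge 1$; your one-line dismissal of the case $a=1$ does not actually work when $c>1/2$ (since then $X=\emptyset$ and $\Delta(H')$ can be as large as $2m-1$), but switching to the paper's selection rule fixes this immediately.
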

\begin{proof}
    Let $V(H) = \{ u_1, \dots, u_n \}$ such that the ordering of $H$ is $u_1, \dots, u_n$ and note that $n \leq 2m$ since $H$ has no isolated vertices.
    Let $1 \leq i_1 < \dots < i_a \leq n$ be the indices of the $a$ vertices of $H$ with the largest degree in $H$ and let $H'_\prec = H \setminus \{u_{i_1}, \dots, u_{i_a}\}$.
    Note that the $\Delta \coloneqq \Delta(H') \le \frac{2m}{a}.$

    For each $j \in [0,a]$ we partition the set $V_j$ equally into at most $n$ sets $V'_{i_j + 1}< \dots< V'_{i_{j+1} - 1}$, where, for convenience, we set $i_0 = 0$ and $i_{a+1} = n + 1$.
    Note that for each $i \in [n] \setminus \{i_1, \dots, i_a\}$ we have $|V'_{i}| \geq \frac{b}{n}$.

    Let $F = \{v_1, \dots, v_a \}$ such that $v_1 < \dots < v_a$ and suppose that we can find an embedding $\phi$ of $H'$ into $G$ such that for each $u_i \in V(H')$ we have $\phi(u_i) \in V'_{i}$.
    Then, by the definition of an $(a,b)$-skeleton, we can set $\phi(u_{i_k}) = v_k$ for each $k \in [a]$ to obtain an embedding of $H$ into $G$.
    Thus, such an embedding $\phi$ cannot exist. Therefore, by Lemma \ref{lemma:greedy_embedding} there exist $A, B \subseteq V(G)$ such that $|A|, |B| \geq (c^\Delta / \Delta) \frac{b}{n} \ge c^{\frac{2m}{a}}\frac{b}{2m^2}$, $A < B$ and $d(A, B) \leq c$.    
\end{proof}

Finally, we can find a skeleton, apply greedy embedding to find a sparse pair $(A, B)$ and recursively repeat the argument inside each of $A$ and $B$ to eventually obtain a sparse set.

\begin{lemma}\label{lemma:recursive_pairs}
    Let $m_1, m_2$ be positive integers with $m_1 \ge \max\{m_2, 100\}$ and let $c \in (0,1/8).$ Let $H_1$ and $H_2$ be ordered graphs with $m_1$ and $m_2$ edges, respectively, and no isolated vertices. Suppose that the edges of the complete ordered graph $G$ on $N$ vertices are partitioned into two ordered graphs $G_1, G_2$. Suppose $H_i$ is not a subgraph of $G_i,$ for $i \in [2].$ Then there exists an $i \in [2]$ and a set set $W \subseteq V(G)$ satisfying $d_{G_i}[W] \le c$ and
    \[ |W| \ge \exp \left(-500 \log(c^{-1}) \left(\log(c^{-1}) \sqrt{\frac{m_2}{\log \log m_1}} + \log \left( \frac{2m_1}{m_2} \right) \sqrt{m_2 \log \log m_1} \right)\right) \cdot N. \] 
\end{lemma}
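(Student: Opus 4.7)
The plan is to combine \cref{lemma:skeleton_in_dense_graph} and \cref{lemma:using_skeleton} in a binary recursion, producing a monochromatic sparse pair at each level of a tree of depth $T = \Theta(\log c^{-1})$, and then extract a sparse set via a telescoping-density estimate together with a pigeonhole argument over the color sequences along the tree.

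Concretely, I would set the skeleton size $s := \sqrt{m_2 \log \log m_1}$, place $V(G)$ at the root, and at each internal node $v$ with set $V_v$ proceed as follows. First, apply \cref{lemma:skeleton_in_dense_graph} with the trivial density threshold $1/2$ (which is always applicable, since $\min\{d_{G_1}(V_v), d_{G_2}(V_v)\} \le 1/2$) to obtain an $(s, b_v)$-skeleton in some color $i_v \in \{1, 2\}$. Then, since $H_{i_v}$ is not a subgraph of $G_{i_v}$ by hypothesis, apply \cref{lemma:using_skeleton} with target density $c$ to get a sparse pair $(L_v, R_v) \subseteq V_v$ with $d_{G_{i_v}}(L_v, R_v) \le c$. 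Take $L_v, R_v$ as $v$'s children and recurse. Each iteration shrinks the set size by a factor of $\exp(-O(s) - O((m_{i_v}/s) \log c^{-1}))$; the two summands correspond respectively to the skeleton-finding cost in \cref{lemma:skeleton_in_dense_graph} and the greedy-embedding cost in \cref{lemma:using_skeleton}, with the second depending on whether $i_v = 1$ or $i_v = 2$.

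To produce $W$, I would pigeonhole over the $2^T$ possible color sequences of root-to-leaf paths: some color $i^*$ is chosen at a majority of levels, and one can restrict to a large subtree whose every internal split is in color $i^*$. The union $W$ of the leaves of this subtree then satisfies $e_{G_{i^*}}(W) \le \sum_{\text{internal }v} c \cdot |L_v||R_v|$ plus a small leaf-level contribution, which telescopes (since the sizes shrink geometrically along the tree) to $O(c)|W|^2$ once $T \ge \log c^{-1}$. A constant-factor rescaling of $c$ then yields $d_{G_{i^*}}(W) \le c$ as required.

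The main obstacle is obtaining the asymmetric size bound involving both $m_1$ and $m_2$. Color-$1$ iterations cost more per step than color-$2$ ones because $m_1 \ge m_2$: the shrinkage exponent contains $(m_{i_v}/s) \log c^{-1}$, which is a factor of $m_1/m_2$ larger for $i_v = 1$. The target exponent's two summands $\log(c^{-1}) \sqrt{m_2/\log \log m_1}$ (weighted by $\log(c^{-1})$) and $\sqrt{m_2 \log \log m_1}$ (weighted by $\log(2m_1/m_2)$) suggest that while one can afford $\Theta(\log c^{-1})$ color-$2$ iterations, the color-$1$ iterations must be limited to $O(\log(2m_1/m_2))$. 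I expect this to be enforced by running two parallel recursions (one targeted at each output color) and arguing that in at least one of them, the number of expensive color-$1$ iterations is bounded by $O(\log(2m_1/m_2))$---plausibly because after that many such iterations the $G_1$-density of the current set has fallen low enough that any subsequent skeleton provided by \cref{lemma:skeleton_in_dense_graph} is forced to lie in $G_2$. Once this is in place, multiplying the per-iteration shrinkage factors yields the stated bound on $|W|$.
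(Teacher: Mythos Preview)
Your outline has a genuine gap that cannot be fixed by the mechanism you propose. The core issue is using a \emph{single} skeleton size $s=\sqrt{m_2\log\log m_1}$ for both colors. A color-$1$ application of \cref{lemma:using_skeleton} with this $s$ incurs a shrinkage factor of $c^{2m_1/s}$, i.e.\ an exponent loss of order $(m_1/m_2)\sqrt{m_2/\log\log m_1}\cdot\log c^{-1}$. Even if you could cap the number of color-$1$ steps at $O(\log(2m_1/m_2))$ as you hope, the total loss would be of order
\[
\log c^{-1}\cdot\log\!\left(\tfrac{2m_1}{m_2}\right)\cdot\frac{m_1}{m_2}\sqrt{\frac{m_2}{\log\log m_1}},
\]
which exceeds the target's second term $\log c^{-1}\cdot\log(2m_1/m_2)\cdot\sqrt{m_2\log\log m_1}$ by a factor of $(m_1/m_2)/\log\log m_1$. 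This can be arbitrarily large, so a single color-$1$ step already blows the budget. Moreover, your suggested mechanism for capping color-$1$ steps---that the $G_1$-density inside the current set drops after enough such steps---does not hold: obtaining a sparse pair $d_{G_1}(L,R)\le c$ says nothing about $d_{G_1}(L)$ or $d_{G_1}(R)$, which are the sets you recurse into.

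The paper's fix is precisely to avoid this asymmetry. Rather than \cref{lemma:skeleton_in_dense_graph}, it invokes \cref{lemma:finding_skeleton} directly via the \emph{off-diagonal} Ramsey number $n=r(5k_1,5k_2)$ with $k_i$ chosen so that $m_1/k_1=m_2/k_2=\sqrt{m_2/\log\log m_1}$. Every $n$-subset then contains a $(4k_i+1)$-clique in some $G_i$, so pigeonhole plus \cref{lemma:finding_skeleton} yields a $(k_i,\cdot)$-skeleton in that color. Because the skeleton is larger exactly when $m_i$ is larger, the greedy-embedding exponent $m_i/k_i$ is the same in both colors; the asymmetry between $m_1$ and $m_2$ is paid only once, in the bound $n\le\binom{10k_1}{5k_2}\le (2m_1/m_2)^{O(k_2)}$, which is where the $\log(2m_1/m_2)\sqrt{m_2\log\log m_1}$ term in the statement comes from. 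The recursion then tracks a pair of counters $(h_1,h_2)$, decrementing $h_i$ whenever the step is in color $i$, and terminates when either counter hits zero---so there is no need to argue that one color is eventually ``forced''.
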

\begin{proof}
    Let $k_2 = \sqrt{m_2 \log \log m_1}$ and $k_1 = m_1 \cdot \frac{k_2}{m_2}\geq k_2$. Observe that $\frac{m_1}{k_1} = \frac{m_2}{k_2} = \sqrt{\frac{m_2}{\log \log m_1}}$.
    Moreover, let
    \[
     n = r(5k_1, 5k_2) \leq \binom{5k_1 + 5k_2}{5k_2} \leq \binom{10k_1}{5k_2} \le \left( \frac{6k_1}{k_2} \right)^{5k_2} \le e^{30 k_2 \log(\frac{e \cdot k_1}{k_2})} \le e^{30 \sqrt{m_2 \log \log m_1} \log (\frac{e \cdot m_1}{m_2})}.
    \]

    The lemma easily follows from the following claim.
    \begin{claim} \label{cl:binary-tree}
        Set $\alpha = \left(\frac{c}{8}\right)^{2\sqrt{\frac{m_2}{\log \log m_1}}} / (8n^5m_1^2)$. Let $h_1, h_2$ be nonnegative integers and let $X \subseteq V(G)$ be a nonempty set of vertices. Then, for some $i \in [2],$ there exists a set $W \subseteq X$ of size at least $\alpha^{h_1 + h_2} |X|$ such that $d_{G_i}[W] \le 2^{-h_i} + c/2.$
    \end{claim}

    Before proving the claim, let us finish the proof of the lemma given Claim~\ref{cl:binary-tree}. By applying this claim with $h = h_1 = h_2 = \lceil \log_2(2/c) \rceil$ and $X = V(G)$ we get an $i \in [2]$ and a set $W$ with $d_{G_i}[W] \le 2^{-h} + c/2 \le c$ and $|W| \ge \alpha^{-2h} N.$ It remains to verify that $W$ is large enough. Note that 
    \[ 8n^5 m_1^2 \le e^{160 \sqrt{m_2 \log \log m_1} \log\left(\frac{e \cdot m_1}{m_2}\right)}, \]
    where we used that $m_1 \ge 100.$ Therefore,
    \begin{align*}
        |W|/N &\ge \alpha^{2h} \ge \exp\left(-2 \log(3c^{-1}) \left( 4 \log (c^{-1}) \sqrt{\frac{m_2}{\log \log m_1}} + 160 \sqrt{m_2 \log \log m_1} \log\left(\frac{e \cdot m_1}{m_2}\right) \right)\right)\\
        &\ge \exp\left( -500 \log(c^{-1}) \left(\log(c^{-1})\sqrt{\frac{m_2}{\log \log m_1}} +\log\left(\frac{e \cdot m_1}{m_2}\right) \sqrt{m_2 \log \log m_1} \right) \right),
    \end{align*} 
    as needed. \qedhere
    
    \begin{proof}[Proof of Claim~\ref{cl:binary-tree}]
        We will prove the statement by induction on $h_1 + h_2$. If $h_j = 0$ for some $j \in [2],$ then the claim trivially holds by taking $i=j$ and $W = X$. Now, assume $h_1, h_2 > 0$ and that the claim holds for all $h_1' + h_2' < h_1 + h_2.$ Furthermore, note that we may assume $|X| \ge \alpha^{-(h_1 + h_2)},$ as otherwise the claim is fulfilled by taking $W$ to consist of a single vertex.
                        
        By the definition of $n =r(5k_1,5k_2)$, we know that for every $Y \subseteq X$ of size $n$, $G_1[Y]$ contains a $(4k_1+1)$-clique or $G_2$ contains a $(4k_2+1)$-clique. By the pigeonhole principle, there is an $i \in [2],$ such that for at least $\frac{1}{2} \binom{|X|}{n}$ sets $Y$, $G_i$ contains a $(4k_i + 1)$-clique. 
        
        By Lemma~\ref{lemma:finding_skeleton} there is a $\left(k_i, \frac{|X|}{2n^5}\right)$-skeleton in $G_i$. Furthermore, since $G_i$ contains no copy of $H_i$, by Lemma \ref{lemma:using_skeleton} there are sets $A, B \subseteq X$ with $A < B$ such that $d_{G_i}(A, B) \leq \frac{c}{8}$ and
        \[
            |A|, |B| \geq \left(\frac{c}{8}\right)^{\frac{2m_i}{k_i}} \frac{|X|}{4n^5 m_i^2} \geq 2 \alpha |X|.
        \]
        Let $A' \subseteq A$ be the $\alpha |X| \leq |A|/2$ vertices in $A$ with the lowest degree into $B$ and note that each vertex in $A'$ has at most $\frac{c}{4} |B|$ neighbors in $B$.
        We apply the induction hypothesis with $h_i' = h_i - 1$ and $h_{3-i}' = h_{3-i}$ on the induced subgraph $G[A']$. Thus for some $\ell \in [2],$ there is a set $W_1' \subseteq A'$ of size at least $\alpha^{h_1' + h_2'} |A'| = \alpha^{h_1 + h_2} |X|$ with $d_{G_\ell}[W_1'] \le 2^{-h'_\ell} + c/2.$ If $\ell \neq i,$ then we are done since $h_\ell' = h_\ell.$ So assume that $\ell = i$. By averaging, there is a subset $W_1 \subseteq W_1'$ of size exactly $\alpha^{h_1 + h_2} |W|$ with $d_{G_i}[W_1] \le 2^{-h_i+1} + c/2$.

        Observe that $d_{G_i}(W_1, B) \le c/4$ since in $G_i$ every vertex in $A' \supseteq W_1$ has at most $\frac{c}{4} |B|$ neighbors in $B$. Let $B'$ be the set of $\alpha |X| \le |B| / 2$ vertices with the lowest degree in $G_i$ into the set $W_1$. Then in $G_i$ every vertex in $B'$ has at most $\frac c2 |W_1|$ neighbors in $W_1$.
            
        We apply the induction hypothesis on the graph $G[B']$ with $h'_i = h_i - 1$ and $h_{3-i}' = h_{3-i}$. Again, if we find a sparse set in $G_{3-i},$ we are done, so we assume that there is a set $W_2' \subseteq B'$ of size at least $\alpha^{h_1' + h_2'} |B'| \ge \alpha^{h_1 + h_2} |X|$ with $d_{G_i}[W_2'] \le 2^{-h_i + 1} + c/2$. Again, by averaging there is a subset $W_2 \subseteq W_2'$ of size exactly $\alpha^{h_1 + h_2} |X|$ with $d_{G_i}[W_2] \le 2^{-h_i+1} + c/2$.

        We claim that $W_1 \cup W_2$ is the desired set. Indeed, recall that in $G_i$ every vertex in $W_2$ has at most $\frac{c}{2} |W_1|$ neighbors in $W_1.$ Therefore, since $|W_1| = |W_2|,$ we have
        \[ d_{G_i}[W_1 \cup W_2] = \frac{1}{4} \left( d_{G_i}[W_1] + d_{G_i}[W_2]\right) + \frac{1}{2} d_{G_i}[W_1, W_2] \le \frac{1}{4} \cdot 2 \cdot (2^{-h_i+1} + c/2) + \frac{1}{2} \cdot c/2 = 2^{-h_i} + c/2, \]
        as required.
    \end{proof}
\end{proof}

\subsection{Putting things together} \label{subsec:putting-together}
We are now ready to prove Theorem \ref{theorem:off_diagonal_ordered}.

\begin{proof}[Proof of Theorem \ref{theorem:off_diagonal_ordered}]
    We prove the statement by induction on $m_1 \cdot m_2$.
    For $m_1 \cdot m_2 \leq 10^6$ the statement clearly holds, since a colored complete ordered graph on $2^{2\cdot10^8}$ vertices contains a clique of size $10^8$ in one of the two colors.

    Now let $m_1, m_2 \in \mathbb{N}$ and suppose that the statement holds for all $m_1'm_2' < m_1 m_2$.
    Without loss of generality, suppose that $m_1 \geq m_2$ and let $H_1$ and $H_2$ be ordered graphs with no isolated vertices and with $m_1$ and $m_2$ edges respectively.
    Moreover let $G$ be a complete ordered graph on $N = e^{10^8 (m_1m_2)^{1/4} (\log \log (m_1 + m_2))^{3/2}}$ 
    vertices whose edges are colored red and blue; we let $G_1$ and $G_2$ be the red and blue graphs respectively. 
    Suppose for contradiction that there is neither a copy of $H_1$ in $G_1$ nor a copy of $H_2$ in $G_2$.

    We first let $c_1 = \frac{m_2}{m_1 \log^2 m_1}$.
    By \cref{lemma:recursive_pairs} we can find an $i_1 \in [2]$ and $W \subseteq V(G)$ such that $d_{G_{i_1}}(W) \leq c_1$ and
    \begin{align*}
        \frac{|W|}{N} &\geq \exp\left(-500 \log \left(\frac{m_1 \log^2 m_1}{m_2}\right)\left(\log \left(\frac{m_1 \log^2 m_1}{m_2}\right) \sqrt{\frac{m_2}{\log \log m_1}} + \log\left(\frac{e \cdot m_1}{m_2}\right) \sqrt{m_2 \log \log m_1} \right) \right)\\
        &\geq \exp \left( -10^4\sqrt{m_2} \cdot (\log \log m_1)^{3/2} \cdot \log^2 \left(\frac{e\cdot m_1}{m_2}\right)  \right) \\
        & \geq \exp \left(  -10^5 \cdot (m_1 m_2)^{1/4}\cdot (\log \log m_1)^{3/2} \right),
    \end{align*}
    where in the first inequality we use twice that $\log \left(\frac{m_1 \log^2 m_1}{m_2} \right) \leq 2 \cdot \left(\log(\frac{e \cdot m_1}{m_2}) \right) \cdot \left(\log \log m_1\right).$
    Further, let $a = \frac{10 m_1}{\sqrt{m_2}}\log^2 m_1$ and notice that since $a \geq \frac{10}{c_1}$ by Lemma \ref{lemma:skeleton_in_dense_graph} there is a $i_2 \in [2]$ such that $G_{i_2}[W]$ contains an $(a, b)$-skeleton for
    \begin{align*}
        b & = |W| \cdot \exp \left( -6000 \cdot  \frac{10 m_1 \log^2 m_1}{\sqrt{m_2} }\cdot \frac{m_2 }{m_1 \log^2 m_1} \cdot \log \left( \frac{m_1 \log^2 m_1}{m_2}\right)\right) \\
        &\geq |W| \cdot \exp \left( -10^6 \cdot \sqrt{m_2} \cdot \log \log m_1 \cdot \log \left(\frac{e \cdot m_1}{m_2}\right)\right) \\
        & \geq \exp \left( (10^8 - 10^6 - 10^5)(m_1 m_2)^{1/4} (\log \log (m_1 + m_2))^{3/2} \right).
    \end{align*}
    
    We now let $c_2 = \frac{1}{6m_1}$.
    Notice that $b \geq 2 m_{1}^2 c_2^{-\frac{2m_{1}}{a}} \geq 2m_{i_2} c_2^{- \frac{2 m_{i_2}}{a}}$ and therefore by Lemma \ref{lemma:using_skeleton} we can find $A, B \subseteq V(G)$ such that $A < B$, $d_{G_{i_2}}(A, B) \leq c_2$ and 
    \begin{align*}
        |A|, |B| & \geq \frac{b}{2m_{i_2}^2} \cdot \exp \left( - \log(6m_{i_2}) \cdot 2m_{i_2} \cdot \frac{\sqrt{m_2}}{10 m_1 \log^2 m_1} \right) \\
        & \geq \exp \left( (10^8 - 10^6 - 10^5 - 10)(m_1 m_2)^{1/4} (\log \log (m_1 + m_2))^{3/2} \right).
    \end{align*}

    We now let $i_3 = 3 - i_2$ and notice that $d_{G_{i_3}}(A, B) \geq 1 - c_2$.
    We want to use this dense pair for our inductive step.
    To that end, let $n = |V(H_{i_3})| \leq 2m_1$ and let $v_1 < \dots < v_n$ be the vertices of $H_{i_3}$.
    Moreover, let $\ell \in [n]$ be the largest index such that for $U_L = \{ v_1, \dots, v_\ell \}$ we have $|E(H_{i_3}(U_L))| \leq m_{i_3} / 2$.
    Let $U_R = V(H_{i_3}) \setminus U_L$ and notice that $|E(H_{i_3}[U_R])| \leq m_{i_3} / 2$ as well.
    Let $L$ and $R$ be the graphs obtained by removing the isolated vertices from $H_{i_3}[U_L]$ and $H_{i_3}[U_R]$ respectively.
    
    Now, let $A'$ be the vertices in $A$ with at least $(1-2c_2)|B|$ neighbors in $B$ and notice that we have $|A'| \geq |A| / 2$.
    Moreover, let $A''$ be the subset of $A'$ obtained by taking every $3m_{1}$-th vertex of $A$ under the ordering of $G$, where we also omit the last vertex we would add to $A''$ in this process.
    Since
    \begin{align*}
        |A''| & \geq  \frac{1}{6m_1^2} \exp\left( (10^8 - 10^6 - 10^5 - 10)(m_1 m_2)^{1/4} (\log \log (m_1 + m_2))^{3/2} \right) - 1 \\
        & \geq \exp \left( 10^8 \left(\frac{m_1\cdot m_2}{2}\right)^{1/4} (\log \log (m_1 + m_2))^{3/2} \right),
    \end{align*}
    by the induction hypothesis we can find an ordered copy of $H_{i_2}$ in $G_{i_2}[A'']$ or an ordered copy of $H_{i_3}$ in $G_{i_3}[A'']$.
    In the former case, we are done.
    Let us therefore assume that the latter happens, i.e., we find an embedding $\phi_1$ of $L$ into $G_{i_3}[A'']$.
    Note that the number of isolated vertices in $H_{i_3}[U_L]$ is at most $2m_{i_3} < 3m_1$ and therefore, since between any two vertices in $A'$ that are used by $\phi$ there is at least $3m_1$ free vertices, we can extend $\phi_1$ into an embedding $\phi_2$ of $H_{i_3}[U_L]$ into $A'$.

    We now let $B' \subseteq B$ be the set of common neighbors of $\phi_2(U_L)$ in $B$ and notice that by our choice of $A'$, we have $|B'| \geq |B| - 2m_{i_3} \frac{1}{3m_1} |B| \geq |B| / 2$.
    We define $B'' \subseteq B'$ in the same way as $A''$ above, i.e., as the set obtained by taking every $3m_1$-th vertex of $B'$ and omitting the last vertex we would add in such a manner.
    Then we again have
    \begin{align*}
        |B''| & \geq  \frac{1}{6m_1^2} \exp\left( (10^8 - 10^6 - 10^5 - 10)(m_1 m_2)^{1/4} (\log \log (m_1 + m_2))^{3/2} \right) - 1 \\
        & \geq \exp \left( 10^8 \left(\frac{m_1\cdot m_2}{2}\right)^{1/4} (\log \log (m_1 + m_2))^{3/2} \right),
    \end{align*}
    and thus, by the induction hypothesis we can either find a copy of $H_{i_2}$ in $G_{i_2}[B'']$ or a copy of $H_{i_3}[U_R]$ in $G_{i_3}[B'']$.
    In the former case we are done, in the latter case we can again obtain an embedding $\phi_3$ of $H_{i_3}[U_R]$ into $G_{i_3}[B']$.

    We now let $\phi: V(H_{i_3}) \to V(G_{i_3})$ be defined as
    \[
        \phi(v) = \begin{cases}
            \phi_2(v), & v \in U_L \\
            \phi_3(v), & v \in U_R
        \end{cases}.
    \]
    The, since $A' < B'$ and $ab \in E(G_{i_3})$ for all $a \in A'$, $b \in B'$ we have that $\phi'$ is an embedding of $H_{i_3}$ into $G_{i_3}$, which concludes the proof.
\end{proof}

\section{Larger subdivisions}\label{section:subdivisions}
In this section, we prove \cref{theorem:degenerate_lower_bound} using the following digraph, which we formally call a $(1, 2)$-subdivision of a transitive tournament.
\begin{definition}
    A $(1, 2)$-subdivision $S_n = (V, E)$ of the transitive tournament on $n$ vertices is the acyclic digraph with the vertex set
    \[
    V = [n] \cup \{ (i, j, k) \in [n]^3 : i < j < k\}
    \]
    and the edge set
    \[
        E = \{ (i, (i,j,k)), ((i, j, k), j), ((i,j,k), k) : 1 \leq i < j < k \leq n \}.
    \]
    We call the set $[n]$ the base vertices of $S_n$.
\end{definition}
It is easy to check that $(1, 2)$-subdivisions are $3$-degenerate. Next we prove that they have super-polynomial oriented Ramsey numbers by constructing a suitably large host tournament which does not contain a copy of $S_n$.
Specifically, we take the iterated blow-up of a random tournament.
We argue that since the random tournament will not contain a transitive tournament on $4 \log n$ vertices, we will be able to use at most $2 \log n$ of the blobs in any embedding of $S_n$.
In particular, in some of the blobs we would have to find a copy of $S_{n'}$, which we exclude by construction.

\begin{theorem}\label{theorem:1_2_subdivision_ramsey}
    For each $n \geq 3$ we have
    \[
    \vv{r}(S_n) \geq n^{\log n /100 \log \log n}
    \]
\end{theorem}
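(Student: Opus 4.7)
The plan is to construct, for each $n \geq 3$, a tournament on at least $n^{\log n / (100 \log \log n)}$ vertices that contains no copy of $S_n$, following the sketch preceding the theorem. Take $T_0$ to be a uniformly random tournament on $n$ vertices; a standard first-moment argument shows that with positive probability $T_0$ contains no transitive subtournament of size $L := \lceil 4 \log n \rceil$, and I fix one such $T_0$. Let $T^{(k)}$ denote the $k$-fold iterated blow-up of $T_0$, so $|T^{(k)}| = n^k$; I will take $k = \lceil \log n / (100 \log \log n)\rceil$. I will prove by induction on $k$ that $T^{(k)}$ contains no copy of $S_m$ whenever $m \geq m_k$, where $m_0 := \lceil (6n)^{1/3}\rceil$ is chosen so that $|V(S_{m_0})| = m_0 + \binom{m_0}{3} > n = |T_0|$ (handling the base case), and $m_k$ grows by a factor of $O(\log n)$ at each step.

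For the inductive step, suppose $T^{(k)}$ contains $S_m$ via some embedding $\phi$, and for each base vertex $i \in [m]$ let $a_i \in V(T_0)$ denote the top-level blob of $\phi(i)$. The step rests on two observations. The first, which I call \emph{same-blob inheritance}, is easy: for every triple $i < j < \ell$ of base vertices with $a_i = a_j = a_\ell$, the subdivision vertex $(i,j,\ell)$ must also lie in this common blob. Indeed, if it lay in a different blob $b$, the 2-path $\phi(i) \to \phi((i,j,\ell)) \to \phi(j)$ in the blow-up would force both $a_i \to b$ and $b \to a_i$ in $T_0$, impossible in a tournament. The second observation, which is the \emph{main obstacle}, is the key lemma that at most $2\log n$ distinct blobs appear among $\{a_1, \ldots, a_m\}$. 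Granting this, pigeonhole produces a blob $B$ containing a set $I$ of at least $m/(2\log n)$ base vertices; by same-blob inheritance, $\phi$ restricted to $I$ together with all triple-subdivision vertices on $I$ embeds $S_{|I|}$ into the inner copy of $T^{(k-1)}$ sitting in $B$, which by induction forces $|I| < m_{k-1}$. Hence $m_k \leq (2\log n)\, m_{k-1}$, and iterating this recursion with my choice of $k$ gives $m_k < n$, so $T^{(k)}$ indeed avoids $S_n$.

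The key lemma is the genuinely hard step. The elementary bound that any tournament on $s$ vertices contains a transitive subtournament of size $\lceil \log_2 s \rceil$ yields only $s \leq 2^L = O(n^4)$, which is far too weak. To reach $s = O(\log n)$ one must exploit the rigid ordered-triple constraints imposed by the subdivision vertices: for every triple $i < j < \ell$ of base vertices with $a_i, a_j, a_\ell$ pairwise distinct, the subdivision blob $b_{ij\ell}$ satisfies $a_i \to b_{ij\ell}$ (or $a_i = b_{ij\ell}$) together with $b_{ij\ell} \to a_j, a_\ell$ (or the corresponding equalities). Combining these conditions simultaneously across all triples, and analyzing the order in which the distinct blobs first appear along the base sequence $a_1, a_2, \ldots, a_m$, one should be able to extract a transitive subtournament of $T_0$ of size comparable to the number $s$ of distinct blobs, contradicting the defining property of $T_0$. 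Carrying out this extraction cleanly is where the real technical difficulty of \cref{section:subdivisions} lies.
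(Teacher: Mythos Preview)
Your key lemma --- that at most $2\log n$ distinct top-level blobs appear among $a_1,\ldots,a_m$ --- is not merely unproven but false, and this is where the argument breaks down. In a random tournament $T_0$ on $n$ vertices, fix any $m$ pairwise distinct blobs $a_1,\dots,a_m$; for each triple $i<j<\ell$, a uniformly random vertex $b$ of $T_0$ satisfies $a_i\to b\to a_j,a_\ell$ with probability about $1/8$, so with high probability some such $b$ exists for every triple simultaneously whenever $m$ is, say, polylogarithmic in $n$. Since the blobs in $T^{(k)}$ are enormous, this yields embeddings of $S_m$ using $m\gg\log n$ distinct top-level blobs. The triple constraints alone do not force a transitive subtournament on the set of used blobs.

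The paper sidesteps this with a sharper claim: the number of blobs containing \emph{at least two} base vertices is at most $4\log n$. Having two base vertices $a_\ell<b_\ell$ in one blob $V_\ell$ \emph{pins down} the subdivision vertex $(a_\ell,b_\ell,b_{\ell'})$ for any later $b_{\ell'}$: it must lie in $V_\ell$, and its outgoing arc to $b_{\ell'}$ then forces $\ell\to\ell'$ in the outer tournament. That localisation is what extracts the transitive subtournament; your outline never isolates pairs within a single blob, so it cannot do this. The paper then handles singleton blobs by a budget you are also missing: it takes the outer tournament on only $n/10$ vertices (not $n$), so singletons contribute at most $n/10$ base vertices, while the $\le 4\log n$ ``big'' blobs each contribute at most $n'=n/(40\log n)$ by induction, giving a total $<n$. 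With $|T_0|=n$ as in your setup, even the correct version of the key lemma would not close the argument.
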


\begin{proof}
    We prove the statement by induction on $n$. The base case is trivial, since for $n \leq 20$ we clearly have $\vv{r}(S_{n}) \geq 4 \geq 20^{\log 20 / 100\log \log 20}$.

    Now suppose that for some $n \in \mathbb{N}$ the statement holds for all $n' < n$.
    We aim to construct a tournament $T$ on $|V(T)| \geq n^{\log n /100 \log \log n}$ vertices that doesn't contain a copy of $S_n$.

    Let therefore $R$ be a tournament on the vertex set $[m]$ where $m = n/10$ that doesn't contain a copy of a transitive tournament on $4\log n$ vertices.
    Note that the probability that a uniformly random tournament on $n_1$ vertices contains a copy of such a transitive tournament is at most $m^{4 \log n} 2^{-8\log^2 n} < 1$, and thus such a tournament indeed exists.

    Now, let $n' = \frac{n}{40\log n}$ and let $T'$ be a tournament on $|V(T')| = n'^{\log n' /100 \log \log n'}$ vertices that contains no copy of $S_{n'}$, which exists by induction.
    We let $T$ be a blow-up of $R$ obtained by replacing each of its vertices by a copy of $T'$.
    More formally, we let $N = m \cdot n'^{\log n' /100 \log \log n'}$ and $T$ be the tournament on the vertex set $V(T) = [N] = V_1 \cup \dots \cup V_{n_1}$, where $|V_\ell| = |V(T')|$ for all $\ell$, defined as follows. For each $\ell \in [m]$, $T[V_\ell]$ is a copy of $T'$, and for each $ij \in E(R)$ we have $V_i \times V_j \subseteq E(T)$.
    
    Suppose now that there is a copy $D$ of $S_n$ in $T$.
    For each $\ell \in [m]$ let $B_\ell$ be the set of the base vertices embedded into $V_\ell$.

    \begin{claim}
        For each $\ell \in [m]$ we have $|B_\ell| < n'$.
    \end{claim}
    \begin{proof}
        Suppose that $|B_\ell| \geq n'$ holds for some $\ell \in [m]$.
        We will show that in this case $T[V_\ell]$ contains a copy of $S_{n'}$.

        Indeed, let $v = (i, j, k) \in V(T) \cap B_\ell^3$ and notice that since $i \to v$, $v \to j$ and $\phi(i), \phi(j) \in V_\ell$, we must have that $\phi(v) \in V_\ell$. Indeed, if $\phi(v)$ is in any other part, then the edges $\phi(i)\phi(v)$ and $\phi(j)\phi(v)$ must be oriented identically.
        Therefore, with a slight abuse of notation, $\phi(V[B_\ell \cup B_\ell^3]) \subseteq V_\ell$ and, since $V[B_\ell \cup B_\ell^3]$ is isomorphic to $S_{|B_\ell|}$ and $|B_\ell| \geq n'$, we get that $T[V_\ell]$ contains a copy of $S_{n'}$, a contradiction to $T[V_k]$ being a copy of $T'$.
    \end{proof}

    \begin{claim}
        We have $|\{ \ell \in [m]: |B_\ell| \geq 2 \}| < 4\log n$.
    \end{claim}
    \begin{proof}
        Without loss of generality, suppose that $\{ i \in [m]: |B_\ell| \geq 2 \} = [k]$ for some $k \in [m]$ and for each $\ell \in [k]$ let $a_\ell \in B_\ell$ be the smallest element and $b_\ell \in B_\ell$ be the second-smallest element in $B_\ell$.
        Again without loss of generality, we can assume that $b_1 < b_2 < \dots < b_k$.

        We now show that for each $1 \leq i < j \leq k$ we have $ij \in E(R)$, which together with the fact that $R$ doesn't contain a copy of $\overrightarrow{K_{4 \log n}}$ will give us $k < 4 \log n$.
        Indeed, for such $i$ and $j$ let $v = (a_i, b_i, b_j)$ and notice first that since $\phi(a_i), \phi(b_i) \in V_i$, $a_i \to v$ and $v \to b_i$ we must have that $\phi(v) \in V_i$.
        Moreover, we must have $\phi(v)\phi(b_j) \in E(T)$ and thus $ij \in E(R)$.
    \end{proof}

    By the two claims we get that
    \[
        n = \sum_{\ell=1}^{n_1} |B_\ell| \leq m + 4\log n \cdot n' < n,
    \]
    a contradiction. Thus $T$ does not contain a copy of $S_n$.
    Finally, we have
    \begin{align*}
        |V(T)| = m \cdot n'^{\log n' /100 \log \log n'} &\geq \frac{n}{10} \cdot \left( \frac{n}{40 \log n} \right)^{\frac{\log n - \log (40 \log n)}{100 \log \log n}} \\
        &\geq \frac{n}{10} n^{\frac{\log n}{100\log \log n} - \frac{1}{10}} \cdot e^{-\frac{(\log n) \cdot \log (40 \log n)}{100 \log \log n}} \\
        &\geq \frac{n}{10}n^{\frac{\log n}{100\log \log n} - \frac{1}{5}}\\
        &\geq n^{\frac{\log n}{100\log \log n}}
    \end{align*}
    and thus $\vv{r}(S_n) \geq n^{\log n /100 \log \log n}$.
\end{proof}
Note that Theorem \ref{theorem:degenerate_lower_bound} follows from Theorem \ref{theorem:1_2_subdivision_ramsey}, since the underlying graph of $S_n$ is $3$-degenerate for each $n$.

\end{document}